\theoremstyle{plain}
\newtheorem{theorem}{Theorem}
\newtheorem{lemma}[theorem]{Lemma}
\newtheorem{claim}[theorem]{Claim}
\newtheorem{conjecture}[theorem]{Conjecture}
\newtheorem{question}[theorem]{Question}
\theoremstyle{definition}
\newtheorem{definition}[theorem]{Definition}
\theoremstyle{remark}
\newtheorem{remark}[theorem]{Remark}
\DeclareMathOperator{\supp}{supp}
\DeclareMathOperator{\poly}{poly}
\DeclareMathOperator{\same}{same}
\DeclareMathOperator{\cycle}{cycle}
\DeclareMathOperator{\diag}{diag}
\newcommand{\bbN}{\mathbb{N}}
\newcommand{\cE}{\mathcal{E}}
\newcommand{\cP}{\mathcal{P}}
\newcommand{\eps}{\varepsilon}
\author{
Jan Hązła\thanks{EPFL, {\tt jan.hazla@gmail.com}.
The author was supported by awards ONR N00014-16-1-2227,  
NSF CCF-1665252 and ARO MURI W911NF-12-1-0509.}
}
\title{On arithmetic progressions in symmetric sets in
finite field model}
\date{}
\begin{document}

\maketitle

\begin{abstract}
  We consider two problems regarding arithmetic progressions in
  symmetric sets in the finite field (product space) model.

  First, we show that a symmetric set
  $S \subseteq \mathbb{Z}_q^n$ containing $|S| = \mu \cdot q^n$ elements
  must contain at least $\delta(q, \mu) \cdot q^n \cdot 2^n$ arithmetic
  progressions $x, x+d, \ldots, x+(q-1)\cdot d$ such that the difference $d$
  is restricted to lie in $\{0,1\}^n$.

  Second, we show that for prime $p$ a symmetric set $S\subseteq\mathbb{F}_p^n$
  with $|S|=\mu\cdot p^n$ elements contains at least
  $\mu^{C(p)}\cdot p^{2n}$ arithmetic progressions of length $p$.
  This establishes that the qualitative behavior of longer arithmetic
  progressions in symmetric sets is the same as for progressions of length
  three.
\end{abstract}

\section{Introduction}

In this paper we consider problems in the finite field model
in additive combinatorics. This model has been a fruitful area of research,
originally considered as a ``playground'' for classical problems 
over integers, but subsequently becoming a source of many results that
are interesting on their own. The reader can consult
two surveys \cite{Gre05, Wol15}
that are removed in time by ten years.

The most well-known problem in this setting concerns arithmetic progressions:
Given a subset $S \subseteq \mathbb{Z}_q^n$
with density $\mu(S) := |S|/q^n$, what are the bounds on the number
of arithmetic progressions of length $k$ contained in $S$? The 
case $q = k = 3$ is called the \emph{capset problem}. There,
it has long been known
\cite{Rot53, Mes95}
that any subset of $\mathbb{F}_3^n$ of constant density must contain an
arithmetic progression
of length three for large enough $n$. Subsequent improvements
culminating in recent breakthrough applying the polynomial method
\cite{CLP17, EG17}
establish that (contrary to the integer case as evidenced by the Behrend's
construction) the largest progression-free set in $\mathbb{F}_3^n$
has density that is exponentially small in $n$. It is also well known
that this last statement is equivalent to the following: There exists a
constant $C > 0$ such that every set $S \subseteq \mathbb{F}_3^n$ with
density $\mu$ contains at least $\mu^C \cdot 9^n$ arithmetic
progressions of length three (including among $9^n$ progressions degenerate
ones with difference zero).

As for longer progressions,
while it is known (for example using the density
Hales-Jewett theorem~\cite{FK91}) that dense subsets of
$\mathbb{F}_p^n$ contain a dense proportion of progressions of any length $k$,
the quantitative bounds are quite weak with the
exception of progressions of length four
(see \cite{GT12}), where it has been established by Green and Tao that
a set of density $\mu$ contains at least an $\exp(-\poly(1/\mu))$ proportion
of all progressions.

We present a result that achieves a $\mu^C$ type of bound for arbitrarily
long progressions, at the expense of restricting ourselves to
\emph{symmetric} sets: Subsets $S\subseteq \mathbb{F}_p^n$ where
membership $x \in S$ is invariant under permutations of coordinates.
More formally,
for $x \in \mathbb{Z}_q^n$ and $a \in \mathbb{Z}_q$ we define
the \emph{weight} $w_a(x) := \left|\left\{i \in [n]: x_i = a\right\}\right|$.
We say that $S \subseteq \mathbb{Z}_q^n$ is symmetric if membership
$x \in S$ depends only on the weight tuple
$(w_0(x), \ldots, w_{q-1}(x))$.

In fact, we prove a more general removal lemma. In the following we find
it useful to frame our statements in terms of probabilities. For that purpose,
let $X^{(1)},\ldots,X^{(p)} \in \mathbb{F}_p^n$ be random variables
representing a uniformly random arithmetic progression of length $p$,
i.e., $X^{(j)} = (X_1^{(j)}, \ldots, X_n^{(j)})$ for $j=1,\ldots,p$, and
tuples $(X_i^{(1)},\ldots,X_i^{(p)})$ for $i=1,\ldots,n$ are independent
and distributed uniformly among $p^2$ possible progressions
$x,x+d,\ldots,x+(p-1)d$ for $x,d\in\mathbb{F}_p$. 

\begin{theorem}\label{thm:poly}
  Let $p \ge 3$ be prime and $0 < \mu < 1$. 
  There exist $n_0$ and $C > 0$ such
  that for all $n \ge n_0$, if $S^{(1)},\ldots,S^{(p)}$
  are symmetric subsets of $\mathbb{F}_p^n$ satisfying
  \begin{align}\label{eq:22}
    \Pr\left[
    X^{(1)}\in S^{(1)}\land\ldots\land X^{(p)}\in S^{(p)}
    \right] < \mu^C \; ,
  \end{align}
  then there exist symmetric sets
  $S'^{(1)},\ldots,S'^{(p)}\subseteq\mathbb{F}_p^n$, each of density
  at most $\mu$, such that letting $T^{(j)} := S^{(j)}\setminus S'^{(j)}$ we have
  \begin{align*}
    \Pr\left[
    X^{(1)}\in T^{(1)}\land\ldots\land X^{(p)}\in T^{(p)}
    \right]=0 \; .
  \end{align*}
\end{theorem}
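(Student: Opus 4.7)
The plan is to use symmetry to reduce Theorem~\ref{thm:poly} to a problem on the weight-tuple simplex, and then construct the removal sets by a thresholding argument. Writing $\Delta_n := \{(k_0,\ldots,k_{p-1}) \in \mathbb{Z}_{\ge 0}^p : \sum_a k_a = n\}$ and $\mu_n$ for the multinomial measure on $\Delta_n$, each symmetric $S^{(j)}$ is encoded by a profile $P^{(j)} \subseteq \Delta_n$ of weight tuples, and its density equals $\mu_n(P^{(j)})$. Letting $N = (N_{x,d})_{x,d \in \mathbb{F}_p}$ count the occurrences of each progression type $(x,x+d,\ldots,x+(p-1)d)$ among the $n$ coordinates of the random AP, the weight tuples of $X^{(j)}$ become $W^{(j)} = \phi^{(j)}(N)$ with $\phi^{(j)}(N)_a := \sum_{(x,d):\,x+(j-1)d=a} N_{x,d}$. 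Setting $\nu_n$ to be the pushforward of the multinomial law of $N$ under $(\phi^{(1)},\ldots,\phi^{(p)})$, the event in \eqref{eq:22} becomes $\nu_n(P^{(1)} \times \cdots \times P^{(p)}) < \mu^C$.

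The first technical step is to restrict attention to a ``typical'' window $\mathcal{W} \subseteq \Delta_n$ of radius $O_p(\sqrt{n\log(1/\mu)})$ around the uniform type $(n/p,\ldots,n/p)$: by Chernoff-type bounds for multinomials, the $\mu_n$-mass outside $\mathcal{W}$ is at most $\mu^{2C}$, and hence the $\nu_n$-mass of configurations in which any coordinate falls outside $\mathcal{W}$ is also negligible. Inside $\mathcal{W}$ the number of types is polynomial in $n$, and local limit theorem approximations express both $\mu_n$ and $\nu_n$ as explicit Gaussians whose covariances come from the progression-type multinomial. The removal sets are then built by thresholding: for each $j$ define
\[
  \nu_n^{(j)}(\tau) := \sum_{v \in P^{(1)} \times \cdots \times P^{(p)}\,:\, v^{(j)} = \tau} \nu_n(v),
\]
so that $\sum_\tau \nu_n^{(j)}(\tau) < \mu^C$, and take the profile of $S'^{(j)}$ to consist of those $\tau$ with $\nu_n^{(j)}(\tau) \ge \mu^{C-1}\mu_n(\tau)$. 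Markov's inequality then gives $\mu_n(S'^{(j)}) < \mu$, meeting the density constraint.

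The hardest step is to verify that this removal destroys every joint AP. After removal every surviving $\tau \in P^{(j)} \setminus S'^{(j)}$ only satisfies $\nu_n^{(j)}(\tau) < \mu^{C-1}\mu_n(\tau)$, which controls a coordinate marginal of $\nu_n$ but does not immediately force $\nu_n(v) = 0$ at individual configurations. I would close this gap by either (a) iterating the thresholding a bounded (in $p$) number of times, using the Gaussian approximation in $\mathcal{W}$ to show that each round shrinks the support of surviving AP configurations geometrically until it is empty, or (b) proving a Loomis--Whitney-type pointwise inequality bounding $\nu_n(v)$ by a product of the $\mu_n(v^{(j)})$'s in the typical window, which is plausible since for prime $p$ the maps $\phi^{(j)}$ are marginals along $p$ distinct lines in $\mathbb{F}_p^2$ and hence ``almost independent'' up to lower-order Gaussian corrections. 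Calibrating $C$ against the polynomial size of $\mathcal{W}$ and the number of iterations, and establishing the pointwise inequality, is where I expect the bulk of the technical work to lie.
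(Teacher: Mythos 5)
There is a genuine gap in your removal step, and your suggested routes around it (iterated thresholding or Loomis--Whitney) do not appear to close it. Your threshold removes $\tau$ from $S^{(j)}$ whenever the marginal $\nu_n^{(j)}(\tau)\ge\mu^{C-1}\mu_n(\tau)$. As you note, a surviving $\tau$ then only satisfies a bound on a \emph{one-dimensional marginal} of $\nu_n$, which does not force $\nu_n(v)=0$ at any individual joint configuration $v$. The theorem requires the post-removal joint probability to be \emph{exactly} zero, not merely small, so a Loomis--Whitney-type pointwise upper bound on $\nu_n(v)$ cannot finish (it makes things small, not zero), and it is unclear why a bounded number of thresholding rounds would ever terminate with empty support while staying within the density budget $\mu$.

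The paper closes this gap with a structural observation that your proposal misses. Writing $W=AM$ for the linear map from the vector $M$ of normalized progression-type counts to the joint weight arrangement $W$, the paper proves (Lemma~\ref{lem:linear-structure}) that the general solution is $m=\tfrac1pBw+K\cdot\mathbb{R}^{p-1}$ for \emph{integer} matrices $B,K$, and that $w=Am$ has an integer solution if and only if $\tfrac1pBw$ is integral, a condition that depends only on $w\bmod p$. Thus the support of $\nu_n$ (restricted to the $O(\sqrt{n\log(1/\mu)})$-window, which you also use) is exactly a union of mod-$p$ congruence classes, and within any compatible tuple of classes \emph{every} choice of weight tuples is realizable, each with probability $\ge\mu^{C}n^{-p(p-1)/2}$ by the local CLT. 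The paper therefore partitions each $R^{(j)}$ into $p^{p-1}$ congruence classes and removes precisely the classes of density below $\mu/(2p^{p-1})$. This gives a clean dichotomy: if a compatible tuple of classes survives in all $p$ sets, a direct count shows the joint probability is at least $\mu^{C}$, contradicting~\eqref{eq:22}; otherwise every feasible weight arrangement has at least one coordinate in a removed class, so no APs remain. Your thresholding has no analogous algebraic rigidity to appeal to, which is why the "hardest step" you flag is indeed where the argument as written cannot be completed.
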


Taking $S^{(1)}=\cdots=S^{(p)}=S$ and noting that due to trivial progressions
with difference $d=0^n$ the probability
$\Pr[X^{(1)},\ldots,X^{(p)}\in S] = 0$ if and only if $S$ is empty,
it follows that a symmetric set of density $\mu$ contains at least
$(\mu/p)^C \cdot p^{2n}$ progressions.\footnote{We note that the question of existence of an arithmetic progression
  in a symmetric set has a positive answer for a less interesting reason:
  If a symmetric set $S$ contains an element $x$ with all weights $w_a(x)$ non-zero,
  then it is easy to find a progression in $S$ consisting only of permutations
  of coordinates of $x$.
  }
We remark that Theorem~\ref{thm:poly} has a weakness in that its conclusion
holds only for large enough $n$ after fixing the density $\mu$. The technical
reason is that we apply a version of the local limit theorem without an
explicit error bound. We do not attempt to fix this deficiency in this work.

The second problem we consider concerns arithmetic progressions
in $\mathbb{Z}_q^n$ with the difference restricted to lie
in $\{0,1\}^n$. Accordingly, we call them \emph{restricted progressions}.
Again, an application of the density Hales-Jewett theorem
establishes that a dense set $S\subseteq \mathbb{Z}_q^n$ contains
a non-trivial restricted progression of length $q$ for large enough $n$.
However, the author is
not aware of a proof that a dense set contains a dense fraction of
all such progressions.

Our second result is a removal lemma for symmetric sets with respect
to restricted progressions:
\begin{theorem}
  \label{thm:removal}
  Let $q \ge 3$ and $\mu > 0$. There exists $\delta := \delta(q, \mu) > 0$
  such that for every tuple of symmetric sets
  $S^{(1)}, \ldots, S^{(q)} \subseteq \mathbb{Z}_q^n$ the following holds:
  Letting $X^{(1)},\ldots,X^{(q)}$ be a random
  arithmetic progression in $\mathbb{Z}_q^n$ with a difference restricted
  to $\{0,1\}^n$, if
  \begin{align*}
    \Pr\left[ X^{(1)} \in S^{(1)}\land\cdots\land X^{(q)}\in S^{(q)}\right]
    <\delta\; ,
  \end{align*}
  then there exists a symmetric set $S'$ with density at most $\mu$ such that
  for $T^{(j)} := S^{(j)}\setminus S'$ we have
  \begin{align*}
    \Pr\left[ X^{(1)} \in T^{(1)}\land\cdots\land X^{(q)}\in T^{(q)}\right]
    =0\; .
  \end{align*}
\end{theorem}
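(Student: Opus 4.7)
The plan is to combine concentration of measure on the weight simplex with a compactness argument. I begin by reformulating: each coordinate of the progression takes one of $2q$ equally likely types---the $q$ constant tuples $(a,\ldots,a)$ and the $q$ cyclic shifts $(a,a+1,\ldots,a+q-1)$ for $a\in\mathbb{Z}_q$. Writing $c_a$ and $p_a$ for the counts of each kind, the pair $(c,p)$ is multinomial with $n$ trials and $2q$ equally likely outcomes, and a direct computation yields $w_a(X^{(j)})=c_a+p_{a-(j-1)\bmod q}$, so the weight tuples of a random progression are cyclically shifted in the $p$-coordinate. Since each $S^{(j)}$ is symmetric it is determined by an indicator $f^{(j)}\colon\Delta_n\to\{0,1\}$ on $\Delta_n=\{w\in\mathbb{Z}_{\ge 0}^q:\sum_a w_a=n\}$, and the progression probability is $\EE_{(c,p)}\bigl[\prod_{j=1}^q f^{(j)}(c+\sigma^{j-1}(p))\bigr]$ where $\sigma$ is the cyclic shift on $\mathbb{Z}_q$.

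First, a concentration argument handles the ``atypical'' part of every $S^{(j)}$. By standard Chernoff bounds on multinomial marginals, there is $C=C(q,\mu)$ such that the symmetric set $B=\bigl\{x:\|w(x)-(n/q)\mathbf{1}\|_\infty\geq C\sqrt{n\log(q/\mu)}\bigr\}$ has density at most $\mu/2$. Including $B$ in $S'$ destroys every progression visiting an atypical weight, so it remains to hit progressions with all $W^{(j)}$ in the central box $R=\Delta_n\setminus B$.

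For this central part I argue by contradiction. Suppose the theorem fails, yielding a sequence $(S^{(j)}_{n_k})_k$ of counterexamples with progression probability tending to $0$ but no admissible $S'_{n_k}$ of density at most $\mu$. Under the rescaling $w=(n_k/q)\mathbf{1}+\sqrt{n_k}\,\tilde w$, the central box becomes a bounded neighborhood of the origin in $\mathbb R^{q-1}$, and the local central limit theorem approximates the multinomial law of $(c,p)$ by a non-degenerate Gaussian on $\mathbb R^{2q-1}$. Banach--Alaoglu on $L^\infty$ of this bounded neighborhood lets me pass to weak$^*$-limits $\tilde f^{(j)}_{n_k}\rightharpoonup \tilde f^{(j)}_\infty\colon\mathbb R^{q-1}\to[0,1]$. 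Vanishing of the limit progression probability forces the common support $V=\{\tilde w:\tilde f^{(j)}_\infty(\tilde w)>0\text{ for all }j\}$ to have Lebesgue measure zero: otherwise a Lebesgue density point $\tilde w_0\in V$ combined with small $\tilde p$ (so all $\sigma^{j-1}(\tilde p)$ are small) and $\tilde c$ in a neighborhood of $\tilde w_0$ produces a positive-Gaussian-mass set of $(\tilde c,\tilde p)$ with $\tilde c+\sigma^{j-1}(\tilde p)\in V$ for every $j$, contradicting vanishing. Lifting back, the lattice thickening of $V$ has density tending to $0$, so for large $k$ the set $S'_{n_k}=B\cup\{x:w(x)\in\text{thickening of }V\}$ has density at most $\mu$ and hits every central progression in $\prod_j S^{(j)}_{n_k}$---a contradiction.

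The main obstacle is the discrete-to-continuous transition. The weak$^*$-limits $\tilde f^{(j)}_\infty$ are only $[0,1]$-valued rather than indicators, so ``common support'' must be handled softly (e.g.\ via Lebesgue differentiation of $\tilde f^{(j)}_\infty$), and translating ``$V$ has Lebesgue measure zero'' into a discrete hitting set of density at most $\mu/2$ requires careful lattice approximation together with the local CLT controlling the ratio of multinomial mass to Gaussian density. This qualitative passage to the limit is also the reason the resulting $\delta(q,\mu)$ is non-quantitative, in line with the same limitation acknowledged for Theorem~\ref{thm:poly}.
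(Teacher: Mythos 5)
Your reduction to the weight simplex and the concentration step that discards the atypical part are both sound and agree with the paper's setup, and the decomposition $w_a(X^{(j)}) = c_a + p_{a-(j-1)\bmod q}$ is correct. The subsequent compactness argument, however, has two fatal gaps, and together they hide exactly the ingredient the paper's proof turns on.

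First, weak$^*$ convergence in $L^\infty$ does not control multilinear averages such as
$\EE_{(c,p)}\bigl[\prod_{j=1}^q f^{(j)}(c+\sigma^{j-1}(p))\bigr]$.
Weak$^*$ limits only guarantee convergence of \emph{linear} functionals of the $f^{(j)}$, while the progression probability is a $q$-linear expression. The standard obstruction: indicators of finer and finer checkerboard-type patterns converge weakly$^*$ to the constant $1/2$, yet their progression counts need not converge to that of the constant $1/2$; highly oscillatory sets can have essentially no nontrivial progressions while their limit has plenty, and vice versa. So neither direction of the deduction ``vanishing limit $\Rightarrow V$ has measure zero'' nor ``positive-measure $V$ $\Rightarrow$ nonvanishing probability along the sequence'' can be made by passing a density point argument through a weak$^*$ limit. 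This is precisely the analytic difficulty that hypergraph regularity / removal lemmas (or Gowers norms) are designed to overcome, and the paper in fact proves the theorem by reducing (via the local CLT, essentially the step you perform) to an integer removal statement, Theorem~\ref{thm:feasible-removal}, and then applying the hypergraph removal lemma. That ingredient has no analogue in your proposal.

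Second, even granting a limit object, the conclusion you draw from ``$V$ has measure zero'' is not the conclusion the theorem needs. You define $V$ as the common support $\{\tilde w : \tilde f^{(j)}_\infty(\tilde w)>0\ \forall j\}$ and propose to delete its lattice thickening. But a restricted progression visits $q$ \emph{different} weight tuples $w^{(1)}\in R^{(1)},\dots,w^{(q)}\in R^{(q)}$; none of these need lie in the intersection of the supports. If, say, $R^{(1)},R^{(2)},R^{(3)}$ are pairwise disjoint but arranged so that many feasible arrangements $(w^{(1)},w^{(2)},w^{(3)})$ land in $R^{(1)}\times R^{(2)}\times R^{(3)}$, then $V=\emptyset$, your removal set is empty, and yet no progression has been killed. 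Deleting the intersection only handles trivial progressions ($d=0^n$). What must be shown is that one can delete, from each $R^{(j)}$, a sparse set of weight tuples whose removal breaks \emph{every} feasible arrangement --- a statement about the combinatorial structure of solutions, not about the intersection of supports. In the paper this is exactly what the encoding of feasible arrangements as simplices in a $(q-1)$-uniform hypergraph together with the hypergraph removal lemma accomplishes (Claims~\ref{cl:simplex-feasible} and~\ref{cl:feasible-simplex}).

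In short: your CLT-style reduction to an integer problem is in the right spirit and mirrors the first half of the paper, but the second half --- a removal lemma for feasible arrangements in $\mathbb{Z}_N^{q-1}$ --- cannot be replaced by a Banach--Alaoglu plus Lebesgue density argument, because the target statement is genuinely multilinear and combinatorial. Some combinatorial regularity input is required.
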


Similar as in the case of Theorem~\ref{thm:poly}, it follows that
a symmetric set $S$ of density $\mu$ contains a dense fraction of all
restricted progressions.

\subsection{Proof idea}

The proofs of Theorems~\ref{thm:poly} and~\ref{thm:removal} are applications of
the same technique, proceeding in two stages. First,
we use a local central limit theorem to show that those theorems are implied
(in fact, equivalent to) certain additive combinatorial statements
over the integers (more precisely, over $\mathbb{Z}^{q-1}$). Since
the membership $x\in S$ depends only on the weight tuple
$(w_0(x),\ldots,w_{q-2}(x))$ (we omit $w_{q-1}(x)$, since, knowing $n$,
it can be inferred from the other components), we can think in terms of weight tuples
in $\mathbb{Z}^{q-1}$ rather than vectors in $\mathbb{Z}_q^n$. Furthermore,
the CLT argument shows that
sampling a random arithmetic progression of length $q$ can be approximated
by sampling $q$ random weight tuples uniformly, under some additional
constraints.

In case of Theorem~\ref{thm:removal} these constraints have the form
of linear equations
with integer coefficients. For illustration, we show below a statement
for $q=3$, which is equivalent to the same-set case of Theorem~\ref{thm:removal}
(a more general statement that we need for full Theorem~\ref{thm:removal}
is given as Theorem~\ref{thm:feasible-removal}).
For $N > 0$, let $[-N, N]$ denote the set
$\left\{n \in \mathbb{Z}: |n| \le N\right\}$.

\begin{restatable}{theorem}{twodimensional}
  \label{thm:two-dimensional}
  Let $\mu > 0$ and let $A_1, B_1, A_2, B_2$ be i.i.d.~uniform
  in $[-N, N]$. There exists $\delta := \delta(\mu) > 0$
  such that
  for every subset $R \subseteq [-N, N]^2$ with
  density $\mu(R) := |R|/(2N+1)^2 \ge \mu$ we have
  \begin{align*}
    \Pr\left[
    (A_1, B_1) \in R \land (A_2, B_2) \in R \land
    (A_1+B_1-B_2, A_2+B_2-A_1) \in R
    \right] \ge \delta \; .
  \end{align*}
\end{restatable}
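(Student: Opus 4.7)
The plan is a Roth-style Fourier argument combined with a density increment. The three ($2$-dimensional valued) linear forms appearing in Theorem~\ref{thm:two-dimensional} have Cauchy--Schwarz complexity one, so the counting operator should be controlled by the $U^2$ norm of $\1_R$, and I will exploit this directly via Fourier analysis on $\mathbb{Z}_M^2$ for a suitable $M$.

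First I would embed $[-N, N]$ into $\mathbb{Z}_M$ with $M := 8N + 1$, chosen so that the quantities $A + B - D$ and $C + D - A$ lie in $[-3N, 3N]$ and hence never wrap around modulo $M$. Extending $f := \1_R$ by zero outside $[-N, N]^2$ turns $f$ into a function on $\mathbb{Z}_M^2$, and the target probability equals $(M/(2N+1))^4$ times the cyclic count
\[
  \Lambda^{\mathrm{cyc}} := \EE_{(A,B,C,D)\in \mathbb{Z}_M^4}\bigl[f(A,B)\,f(C,D)\,f(A+B-D,\,C+D-A)\bigr].
\]
Fourier-expanding each copy of $f$ and imposing that the coefficients of $A, B, C, D$ vanish forces
\[
  \chi_2 \;=\; (\eta_1 - \xi_1,\,-\xi_1), \qquad \chi_3 \;=\; (-\eta_1,\,\xi_1 - \eta_1)
\]
as invertible linear functions of $\chi_1 = (\xi_1, \eta_1)$; one checks directly that $\chi_1 \mapsto \chi_3$ has order three and $\chi_1 \mapsto \chi_2$ is its inverse. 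Thus $\Lambda^{\mathrm{cyc}} = \sum_{\chi} \hat f(\chi)\,\hat f(T_2\chi)\,\hat f(T_3\chi)$, whose $\chi = 0$ term is the main contribution $\mu_M^3$ (with $\mu_M := |R|/M^2$); since $T_2, T_3$ are bijections, Cauchy--Schwarz together with Parseval bounds the remaining sum by $\mu_M \cdot \max_{\chi \neq 0}|\hat f(\chi)|$.

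This splits the argument into two cases. If $\max_{\chi \neq 0}|\hat f(\chi)| \le \mu_M^2 / 2$, the main term dominates and, after the $(M/(2N+1))^4$ rescaling, yields $\delta \ge c\mu^3$ for an absolute constant $c$. Otherwise, a non-zero Fourier coefficient of size $\ge \mu_M^2/2$ produces, by the standard argument, a density increment $\mu \mapsto \mu + c'\mu^2$ for $R$ on a two-dimensional arithmetic progression (or Bohr neighbourhood) of size polynomial in $N$. Iterating at most $O(1/\mu)$ times---since the density cannot exceed one---must eventually land us in the first case. The substructure produced at each step is again affinely equivalent to $[-N', N']^2$, and the three linear forms defining our triple are preserved by affine changes of variables, so the argument recurses cleanly.

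The main obstacle I expect is the bookkeeping of this iteration: verifying at each step that the substructure really is affinely equivalent to $[-N', N']^2$ with the same defining linear forms, and that the sizes $N'$ remain above whatever threshold the Fourier step requires through $O(1/\mu)$ iterations. A related subtlety is the mismatch between uniform sampling on the interval $[-N, N]$ and uniform sampling on $\mathbb{Z}_M$; the no-wrap-around choice of $M$ absorbs this into an absolute multiplicative constant that does not affect the final $\delta(\mu)$.
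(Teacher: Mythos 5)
Your Fourier calculation for the counting operator is correct: writing $\Lambda^{\mathrm{cyc}}$ as a sum over a single character $\chi$, with $\chi_2 = T_2\chi$ and $\chi_3 = T_3\chi$ and $T_2, T_3$ of the form you describe (one readily checks $T_3^3 = \Id$, $T_2 = T_3^{-1}$, both unimodular), so the pattern is indeed $U^2$-controlled and the pseudorandom case yields a count $\ge c\mu_M^3$. This part is sound and is genuinely different from the paper, which never touches Fourier analysis: the paper instead builds an auxiliary tripartite graph on vertex set $[-3N,3N]^2$ per part, encodes feasible triples as triangles, and applies the triangle removal lemma directly.

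However, the density-increment half of your argument has a structural gap, and it is the same gap that prevents the naive Roth density-increment from directly yielding supersaturation (as opposed to mere existence). Each iteration passes to a substructure of size polynomial in $N$; after the $O(1/\mu)$ iterations needed for the density to exceed one, the pseudorandom case is finally invoked on a sub-box of side length roughly $N^{c^{O(1/\mu)}}$. The $\gtrsim\mu^3$ pattern count you extract there is relative to the \emph{small} sub-box, so it produces only $\gtrsim\mu^3 N^{4\cdot c^{O(1/\mu)}}$ patterns in total, which is a vanishing fraction of $(2N+1)^4$ as $N\to\infty$; you do not obtain a bound $\delta(\mu)$ independent of $N$. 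To convert the existence statement into the asserted supersaturation one, you would need a Varnavides-style averaging step (average over all pattern-preserving sub-boxes), which is absent from the proposal. The triangle removal route in the paper gets supersaturation for free because the removal lemma is itself a supersaturation statement.

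A secondary, more technical issue: you assert the substructure at each increment step is ``again affinely equivalent to $[-N',N']^2$ and the three linear forms are preserved by affine changes of variables.'' Writing the constraint as $v_3 = Pv_1 + Qv_2$ with $P = \left(\begin{smallmatrix}1 & 1\\ -1 & 0\end{smallmatrix}\right)$, $Q = I-P$, an affine change $v\mapsto Dv+t$ preserves the pattern iff $D$ commutes with $P$ (translations are free because $P+Q=I$). So the admissible $D$ lie in the two-parameter family $\{aI+bP\}$, not in all of $GL_2$. The increment coming from a single large Fourier coefficient naturally localises to a one-codimensional strip (Bohr set), not a pattern-preserving box; to extract a box $t + D[-L,L]^2$ inside it you need a pigeonhole over $(a,b)$ in the centralizer to make both $\langle\chi, De_1\rangle$ and $\langle\chi, De_2\rangle$ small, while also keeping $\det D = a^2-ab+b^2$ invertible mod $M$. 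This is plausibly doable but is substantive work that the proposal elides, and it has no analogue in the one-dimensional Roth argument you are modelling the step on.

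In summary: the Fourier-analytic observation ($U^2$-control via the bijections $T_2, T_3$) is correct and interesting, but the iteration as written proves an existence theorem, not the supersaturation theorem stated; you would need to bolt on a Varnavides argument and flesh out the pattern-preserving substructure step before this constitutes a proof.
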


To prove Theorems~\ref{thm:feasible-removal} and~\ref{thm:two-dimensional} we use
a (hyper)graph removal lemma argument, similar as
in the classical proof of Szemerédi's theorem or in works on
removal lemmas for sets of linear equations
\cite{KSV09, Sha10, KSV12}. This application of graph removal makes
the constant $\delta$ to be very small compared to the density $\mu$
and we do not attempt to make it explicit.

Moving to Theorem~\ref{thm:poly}, it turns out that, since there are more
progressions to choose from, there is a larger collection of possible
arrangements of $p$ weight tuples. As a result, the constraints in the
relevant problem over $\mathbb{Z}^{p-1}$ turn out to be linear equations
modulo $p$, which can be directly handled in an easier and more abstract
fashion, at least for prime $p$.

While the restriction to symmetric sets is a strong one and the application
of central limit theorem might be considered quite natural, the author
finds it interesting that this technique results in an easier proof
and a better bound in Theorem~\ref{thm:poly} as compared to
Theorem~\ref{thm:removal}.

\subsection{Correlated spaces}

One can view the finite field problems we consider as instances in a more
general framework of \emph{correlated product spaces}.
Namely, let $\Omega$ be a finite set, $\ell \ge 2$
and $\cP$ a probability distribution
over $\Omega^{\ell}$ such that all of its $\ell$ marginals are uniform over
$\Omega$. We call such a distribution $\cP$ an
\emph{$\ell$-step correlated space}.
We consider the product probability space with $n$ i.i.d.~coordinates,
where coordinate $i \in [n]$ gives rise to a random tuple
$X_i^{(1)}, \ldots, X_i^{(\ell)}$ distributed according to $\cP$.

The random variables $X_i^{(j)}$ form $\ell$ random vectors
$X^{(j)} := \left(X_1^{(j)},\ldots,X_n^{(j)}\right)$.
Each of those vectors is individually
uniform in $\Omega^n$, but their joint distribution exhibits
correlation across the steps. We consider a setting
with fixed correlated space and $n$ going to infinity.

Most generally, given sets $S^{(1)}, \ldots, S^{(\ell)} \subseteq \Omega^n$
with densities $\mu^{(1)}, \ldots, \mu^{(\ell)}$ we want to study the
probability
\begin{align*}
  \Pr\left[X^{(1)}\in S^{(1)} \land \ldots \land X^{(\ell)} \in S^{(\ell)}\right]\;.
\end{align*}

For example, one can ask about the \emph{same-set} case
$S^{(1)} = \ldots = S^{(\ell)} = S$ with $\mu := \mu(S) > 0$.
That is, for a given correlated space we can ask if there exists a bound 
\begin{align}
  \label{eq:19}
  \Pr\left[ X^{(1)} \in S \land\ldots\land X^{(\ell)}\in S\right]
  \ge c\left(\cP, \mu\right) > 0 \;?
\end{align}
This problem was introduced in \cite{HHM18} and we call a space satisfying
\eqref{eq:19} \emph{same-set hitting}.
Note that indeed the capset problem is captured by
the same-set hitting on the three-step correlated space
where $\Omega = \mathbb{F}_3$ and $\cP$ is uniform in the set
of progressions of length three, i.e.,
$\left\{000,111,222,012,120,201,021,102,210\right\}$.

Considering ``dictator'' sets, for which the membership depends on a
single coordinate, it is easy to see that a necessary condition for
same-set hitting is that the diagonal
\begin{align*}
  \diag(\Omega) := \left\{ (\omega,\ldots,\omega): \omega \in\Omega\right\}
\end{align*}
is contained in the support of $\cP$. In \cite{HHM18} we proved that
this condition is sufficient for $\ell=2$. As a matter of fact, we state
the following conjecture:
\begin{conjecture}
  \label{con:same-set}
  Every correlated space with $\diag(\Omega) \subseteq \supp(\cP)$
  is same-set hitting.
\end{conjecture}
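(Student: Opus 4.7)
The plan is to attempt induction on $\ell$, using the $\ell = 2$ case of \cite{HHM18} as the base. A direct induction by conditioning on $X^{(1)} \in S$ fails because conditioning on a complicated set destroys the product structure across coordinates, so the inductive hypothesis cannot be invoked on the conditional distribution of $(X^{(2)},\ldots,X^{(\ell)})$. A softer route is to extract and then chain pairwise information: for each $j \in \{2,\ldots,\ell\}$ the marginal $(X^{(1)}, X^{(j)})$ is a $2$-step correlated space whose support contains $\diag(\Omega)$, so by \cite{HHM18} one already has $\Pr[X^{(1)} \in S \land X^{(j)} \in S] \ge c_0(\cP,\mu) > 0$. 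The difficulty is that pairwise lower bounds do not combine into a joint lower bound without additional information.

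To remedy this I would attempt a structure-versus-randomness decomposition for $\1_S$, in the spirit of the paper's own strategy. Let $T$ be the Markov averaging operator associated to one coordinate of $\cP$, viewed for instance as a reweighting of the projections $X^{(1)} \mapsto X^{(j)}$. The hypothesis $\diag(\Omega) \subseteq \supp(\cP)$ forces the diagonal entries of $T$ to be strictly positive, and one expects a quantitative spectral gap and hypercontractive estimates for $T$ to follow. Using these one would try to write $\1_S = f_{\mathrm{str}} + f_{\mathrm{rand}}$ where $f_{\mathrm{str}}$ lives in a low-complexity ``structured'' subspace on which the joint expectation can be computed combinatorially (and lower bounded using the positivity of $T$ on the diagonal), while $f_{\mathrm{rand}}$ contributes negligibly to the $\ell$-fold expectation thanks to the hypercontractive bound. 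The pairwise bounds coming from the $\ell = 2$ case would be used to control cross terms.

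The main obstacle is that in full generality $\cP$ is an arbitrary distribution with uniform marginals and $\diag(\Omega) \subseteq \supp(\cP)$, with no Fourier, algebraic, or Markov-chain structure to anchor the decomposition. For concrete examples like arithmetic progressions of length $\ell \ge 4$, the best available quantitative bounds are far weaker than $\mu^C$, which strongly suggests that a successful attack on the conjecture would require Gowers-type uniformity norms adapted to $\cP$ together with a matching inverse theorem, or a regularity-and-removal framework at the level of hypergraphs indexed by $\supp(\cP)$. The restriction to symmetric sets, together with the local CLT reduction to $\mathbb{Z}^{q-1}$, is precisely what lets the present paper bypass this machinery; without such a reduction I expect the conjecture to demand genuinely new analytic tools for general correlated spaces, and this is where I would anticipate the real work to lie.
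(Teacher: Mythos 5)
The statement you were asked about is Conjecture~\ref{con:same-set}, which the paper explicitly poses as an \emph{open conjecture} and does not prove; there is therefore no ``paper's own proof'' to compare against. Your write-up correctly recognizes this: you do not claim a proof, you survey plausible lines of attack (induction on $\ell$, pairwise reduction via the $\ell=2$ case of \cite{HHM18}, a structure-versus-randomness decomposition with hypercontractivity controlling the pseudorandom part), and you identify why each one stalls. Your diagnosis is sound and consistent with what the paper itself says: the $\ell=2$ case and the $\rho(\cP)<1$ regime are settled by \cite{HHM18}, general $\ell$ with only $\diag(\Omega)\subseteq\supp(\cP)$ is open, and for arithmetic-progression-type spaces the known quantitative bounds (e.g.\ for $k\ge 4$) fall far short of $\mu^{C}$, so any resolution likely needs either a Gowers-uniformity/inverse-theorem framework adapted to $\cP$ or a hypergraph-regularity approach. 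You also correctly note that the present paper sidesteps all of this by restricting to symmetric sets and reducing via a local CLT to an integer lattice problem where (hyper)graph removal applies. In short: this is not a gap in your reasoning but a correct identification that the conjecture remains unproved; no further comparison is possible.
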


Generalizing Theorem~\ref{thm:removal} to arbitrary sets would
confirm Conjecture~\ref{con:same-set} in case of restricted arithmetic
progressions.
A related, more general question is if general removal lemma holds
for correlated product spaces:
\begin{question}
  \label{q:removal}
  Is it the case that for every correlated space $\cP$
  and every $\mu > 0$ there exists $\delta(\cP, \mu) > 0$ such that
  if
  \begin{align*}
    \Pr\left[X^{(1)} \in S^{(1)} \land\ldots\land X^{(\ell)}\in S^{(\ell)}\right]
    < \delta \;,
  \end{align*}
  then it is possible to remove a set $S'$ of density at most $\mu$
  from $S^{(1)}, \ldots, S^{(\ell)}$ and obtain $T^{(j)} := S^{(j)}\setminus S'$
  with
  \begin{align*}
    \Pr\left[X^{(1)} \in T^{(1)} \land\ldots\land X^{(\ell)}\in T^{(\ell)}\right]
    =0 \;?
  \end{align*}  
\end{question}

For all the author knows, we cannot even exclude a positive answer
to Question~\ref{q:removal} 
with $\delta > \mu^{C(\mathcal{P})}$ for every
correlated space $\mathcal{P}$. On the other hand, while it is
plausible that with some effort Theorems~\ref{thm:poly} and~\ref{thm:removal}
can be generalized to hold for arbitrary correlated spaces, in this work
we leave even this problem unresolved. We also leave open the problem of
characterizing the correlated spaces which allow a stronger polynomial bound
from Theorem~\ref{thm:poly}.
The class of spaces for which we can confirm
Conjecture~\ref{con:same-set} is limited and we discuss known results
in the following section.

\subsection{Related works}
We mention here some works that we find most relevant to our results and
proofs.

As we said before, one well-studied example of a correlated space
corresponds to the problem of arithmetic progressions in the finite field model.
Extensive recent line of work based on the
polynomial method
\cite{Gre05a, BX15, FK14, BCCGN, KSS18, Nor16, Peb18, FL17, FLS18, LS18}
culminated in establishing that for random \emph{$k$-cycles}, i.e.,
solutions to the equation $x_1+\cdots+x_k=0$ over finite field
$\mathbb{F}_p$ indeed the removal lemma holds
with $\delta > \mu^{C}$.

More generally, another interesting instance of a correlated space
arises when we take $\Omega=G$ for a group $G$ and $\cP$ is uniform over solutions
to some (full-rank) fixed linear equation system over $G$.
For example, a random arithmetic progression $a_1, \ldots, a_q$
over $\mathbb{Z}_q$
is a random solution of the equation system
$\left\{a_j + a_{j+2} = 2a_{j+1}\right\}_{j \in \{1,\ldots,q-2\}}$.
Green \cite{Gre05a} established such removal lemma for a single equation
and any abelian group $G$ (not
necessarily in the product setting)
and further work by Shapira~\cite{Sha10}
and Král', Serra and Vena~\cite{KSV12}
extended it to systems of equations over
finite fields, and it can be seen that their results carry over to the
product model $\mathbb{F}_p^n$.

Our proof of Theorem~\ref{thm:feasible-removal}, which is the second
part of the proof of Theorem~\ref{thm:removal}, is related to
this previous work on removal lemmas in systems of linear equations
in the following way: On the one hand,
the statement of Theorem~\ref{thm:feasible-removal}
is a removal lemma for a particular type of a system of linear equations.
Since it is a special system with some additional structure,
more involved constructions from \cite{Sha10} and \cite{KSV12}
are not required and we make a simpler
argument, similar as in the proof of Szemerédi's theorem or in \cite{KSV09}.
On the other hand, since we consider subsets $W \subseteq \mathbb{Z}^{q-1}$,
our result is not directly covered by \cite{Sha10} or \cite{KSV12},
which concern
only $W \subseteq \mathbb{Z}$.

Regarding Theorem~\ref{thm:removal}, the problem of restricted progressions
in $\mathbb{Z}_q^n$ seems to be particularly challenging, with most relevant
questions being wide open.
For example, it follows from known results that a \emph{linear subspace} of
$\mathbb{F}_3^n$ free of restricted progressions
has dimension at
most $n/2$ and that a subset of $\mathbb{F}_3^n$ that does not contain
a restricted progression \emph{of length two} must have size at most
$2^n$~\cite{Lev20}. It is also known~\cite{HHM18}
that every subset of $\mathbb{F}_3^n$ of
density $\mu$ contains at least $\delta(\mu)\cdot 6^n$ restricted progressions
of length two, where $\delta$ is a triply exponentially small in $\mu$.

More generally, a paper by Cook and Magyar \cite{CM12} shows that a set
of constant density $S \subseteq \mathbb{F}_p^n$ in the finite field model
contains a constant proportion of arithmetic progressions
with differences restricted to lie in a sufficiently well-behaved
algebraic set. However, the author does not see how to apply their result
in a very restricted setting of differences from $\{0,1\}^n$.

One reason we find the framework of correlated spaces interesting
is that it encompasses
some important problems from analysis of discrete functions, with
applications in computer science.
A canonical example of this setting are two steps $\ell=2$ over
binary alphabet $\Omega=\{0,1\}$
with $\cP(00) = \cP(11) = (1-p)/2$, $\cP(01) = \cP(10) = p/2$ for some
$p \in [0, 1]$. More generally, one can take any correlated space $\cP$
and add to it a small amount of uniform noise, e.g., taking
$\cP' := (1-\eps)\cdot\cP + \eps\cdot\mathcal{U}$, where $\mathcal{U}$ is the uniform
distribution over $\Omega^\ell$.

It turns out that the theory of reverse hypercontractivity \cite{MOS13}
can be used to show that in such setting (and, more generally, whenever
$\supp(\cP) = \Omega^\ell$), one gets a general \emph{set hitting}:
For any $S^{(1)}, \ldots, S^{(\ell)}$ with $\mu(S^{(j)}) \ge \mu$
it holds that
\begin{align*}
  \Pr\left[X^{(1)}\in S^{(1)}\land\ldots\land X^{(\ell)}\in S^{(\ell)}\right]
  \ge \mu^{C} \; .
\end{align*}

More generally, \cite{HHM18} established Conjecture~\ref{con:same-set} 
for $\ell = 2$, as well as whenever a certain \emph{correlation} value
is bounded with $\rho(\cP) < 1$.
The latter condition intuitively corresponds to the following:
For all possible assignments of values to $\ell-1$ of the steps in $\cP$, the value
of the remaining step is not determined. Note that this is quite a different regime
that what is usually encountered in additive combinatorics. For example,
the condition does not hold for full-rank systems of $r$ linear equations over
$m$ variables, where fixing $m-r$ variables determines the values of the remaining
$r$ variables.
\cite{HHM18} is based on the invariance principle
by Mossel \cite{Mos10}, which together with a follow-up
work\footnote{The technique from \cite{Mos17}
  implies another proof of same-set hitting for spaces
  with $\rho(\cP)<1$.} \cite{Mos17}
establishes set-hitting and, more precisely, Gaussian bounds, in
spaces with $\rho(\cP)<1$ for
sets with small low-degree Fourier coefficients.

A work by Friedgut and Regev \cite{FR18} applied the invariance principle
and previous work with Dinur \cite{DFR08} to establish a removal lemma
in the two-step case $\ell=2$. This removal lemma has tower-type dependence
between $\mu$ and $\delta$, which is worth contrasting with
\cite{HHM18} which established an easier property of same-set hitting
but with ``only'' triply exponential dependence between $\mu$ and $\delta$.
\cite{DFR08} and \cite{FR18} also studied the structure of sets
with hitting probability zero, establishing that any such set must be almost
contained in a junta.

The invariance principle can be compared with the Fourier-analytic approach
to Szemerédi's theorem due to Gowers \cite{Gow98, Gow01},
which takes as its starting point the fact that the space of arithmetic
progressions of length $k$ is set hitting for all sets with
low \emph{Gowers uniformity norm $U_k$}.

Finally, we note a work by Austrin and Mossel \cite{AM13} that
established set hitting for low-Fourier degree sets with small
Fourier coefficients in all correlated spaces where the distribution $\cP$
is pairwise independent.

\paragraph{Organization of the paper}
In the following we prove Theorems~\ref{thm:poly} and~\ref{thm:removal}.
In Section~\ref{sec:prelim} we introduce some notation, as well as the
local limit theorem we use in the remaining proofs.

For convenience of a less committed reader, in Section~\ref{sec:three}
we prove the same-set case of Theorem~\ref{thm:poly} for $q=3$.
This proof utilizes main ideas of our technique, while being somewhat
less technical and lighter in notation.

We proceed to prove Theorem~\ref{thm:poly} in Section~\ref{sec:poly}
and Theorem~\ref{thm:removal} in Section~\ref{sec:full}.
Each of the latter three sections is intended to be self-contained.

\paragraph{Acknowledgements}
I am grateful to Thomas Holenstein for suggesting the problem
and to Elchanan Mossel for helpful discussions and encouragement to write
this paper. I also thank the referee for helpful recommendations.

\section{Preliminaries}
\label{sec:prelim}

We use both $O(\cdot)$ and $\Omega(\cdot)$ asymptotic notation, as well as
constants $C > 0$ that will vary from time to time.
All such implicit constants are allowed to depend on the alphabet size denoted
by $p$ or $q$.

Given $x\in\mathbb{Z}_q^n$ and $a\in\mathbb{Z}_q$, we define the respective
\emph{weight} to be $w_a(x) := |\{i\in [n]: x_i = a\}|$. In the context of
arithmetic progressions $x^{(1)},\ldots,x^{(q)}$ of length $q$,
we will often speak of \emph{weight tuples}
$w^{(j)} = (w^{(j)}_0,\ldots,w^{(j)}_{q-2})$, where coordinates of $w^{(j)}$
will be weights $w_a(x^{(j)})$ shifted by a normalizing term approximately
equal to $n/q$. A collection of $q$ weight tuples
$w = (w^{(1)},\ldots,w^{(q)})$ will be referred to as a \emph{weight arrangement}.

In some of the estimates we employ standard notation
$\|x\|_2=\sqrt{\sum_{i=1}^n x_i^2}$ and
$\|x\|_\infty=\max_{i=1,\ldots,n}|x_i|$ for $x\in\mathbb{R}^n$.

We will apply several times the following corollary 
of the local multidimensional central limit theorem
(see, e.g., Chapter~5 in \cite{BR10} or Section~7 in \cite{Spi76}):
\begin{theorem}
  \label{cor:lclt}
  Let $W_1, \ldots, W_n$ be i.i.d.~random tuples such that each
  $W_i = (W^{(1)}_i, \ldots, W^{(\ell)}_i)$ is distributed uniformly
  in
  \begin{align*}
    \{0^\ell\} \cup \left\{ 0^j 1 0^{\ell-j-1}: 0 \le j \le \ell-1\right\}\;.
  \end{align*}

  For any tuple $w \in \mathbb{Z}^{\ell}$ with
  $d := w - \frac{n}{\ell+1}\cdot(1,\ldots,1)$
  we have
  \begin{align}
    \Pr\left[\sum_{i=1}^n W_i = w\right] &=
    \frac{(\ell+1)^{(\ell+1)/2}}{(2\pi)^{\ell/2}}
    \cdot \frac{1}{n^{\ell/2}} \cdot \exp\left(
    - \frac{\ell+1}{2n}\left(\|d\|_2^2 + \left(\sum_{j=1}^\ell d_j\right)^2
                                           \right)\right)
    \nonumber\\
    &\qquad+o\left(\frac{1}{n^{\ell/2}}\cdot\min\left(
    1, \frac{n}{\|d\|_2^2}
    \right)\right)\; ,\label{eq:26}
  \end{align}
  where the error term converges uniformly in $w$. In particular, we have
  \begin{align}\label{eq:23}
    \frac{1}{Cn^{\ell/2}}\cdot\left(
    \exp\left(-\frac{C\|d\|_2^2}{n}\right)+o(1)\right)
    \le\Pr\left[\sum_{i=1}^n W_i=w\right]
    \le\frac{C}{n^{\ell/2}}
  \end{align}
  for some $C > 0$ that depends only on $\ell$.
\end{theorem}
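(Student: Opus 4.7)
The plan is to view this as a direct application of the multidimensional local limit theorem for lattice-valued random vectors, as stated in the referenced Chapter~5 of \cite{BR10}. The content of the corollary is then essentially a matter of computing the mean vector and the covariance matrix of a single summand $W_i$ and plugging these into the standard Gaussian density formula, plus some bookkeeping to derive the two-sided bound in \eqref{eq:23} from the leading-order formula \eqref{eq:26}.

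First I would record the one-step statistics. Each $W_i$ is supported on $\{0^\ell, e_1, \ldots, e_\ell\}$ with uniform probability $1/(\ell+1)$, so the mean vector is $\mu = \tfrac{1}{\ell+1}(1,\ldots,1)$ and the covariance matrix is
\begin{align*}
  \Sigma = \frac{1}{(\ell+1)^2}\bigl((\ell+1)I - J\bigr),
\end{align*}
where $J$ is the all-ones matrix (the off-diagonal entries being negative because only one coordinate of $W_i$ can equal $1$ at a time). A short calculation gives $\det\Sigma = (\ell+1)^{-(\ell+1)}$ and, using the ansatz $\Sigma^{-1} = a I + b J$, yields
\begin{align*}
  \Sigma^{-1} = (\ell+1)(I + J),
\end{align*}
so that for $d = w - n\mu$ the quadratic form in the Gaussian exponent is
\begin{align*}
  d^{\top}\Sigma^{-1} d = (\ell+1)\Bigl(\|d\|_2^2 + \Bigl(\sum_{j=1}^\ell d_j\Bigr)^{\!2}\Bigr).
\end{align*}
Plugging $\mu, \Sigma^{-1}, \det\Sigma$ into the density form of the local CLT $\Pr\bigl[\sum W_i=w\bigr]=(2\pi n)^{-\ell/2}(\det\Sigma)^{-1/2}\exp\bigl(-\tfrac{1}{2n} d^\top\Sigma^{-1}d\bigr)+\text{error}$ immediately produces the main term of \eqref{eq:26} with constant $(\ell+1)^{(\ell+1)/2}/(2\pi)^{\ell/2}$.

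The subtler point is the error term, which must be $o(n^{-\ell/2}\min(1,n/\|d\|_2^2))$ uniformly in $w$. The uniform $o(n^{-\ell/2})$ bound is the textbook statement; the improvement on the regime $\|d\|_2^2 \gg n$ comes from the moderate-deviation refinement of the local CLT available in the same references (the distribution of $W_i$ is lattice with finite exponential moments, so the Cramér-type correction is straightforward). Finally, for the two-sided estimate \eqref{eq:23}, on the ``small deviation'' range $\|d\|_2^2 \le Cn$ the main Gaussian term is $\Theta(n^{-\ell/2})$ and dominates the error; on the ``large deviation'' range $\|d\|_2^2 > Cn$ the refined error is $o(n^{-\ell/2}\cdot n/\|d\|_2^2)$, which in turn is absorbed into an $\exp(-C\|d\|_2^2/n)$ factor. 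The main obstacle in the write-up is thus mostly bookkeeping: verifying that the particular form of the local CLT used from \cite{BR10} indeed gives the refined, position-dependent error term, and collecting the constants, rather than any genuinely new probabilistic content.
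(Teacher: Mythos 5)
The paper offers no proof of this theorem at all: it is stated as a corollary of the multidimensional local CLT with a bare citation to \cite{BR10} and \cite{Spi76}. Your proposal correctly reconstructs the implicit argument, and the calculations check out: the covariance matrix $\Sigma = \frac{1}{(\ell+1)^2}\bigl((\ell+1)I - J\bigr)$ has eigenvalues $1/(\ell+1)^2$ (once, along the all-ones direction) and $1/(\ell+1)$ (with multiplicity $\ell-1$), giving $\det\Sigma = (\ell+1)^{-(\ell+1)}$; the ansatz $\Sigma^{-1} = (\ell+1)(I+J)$ is verified by a one-line multiplication; and $d^{\top}\Sigma^{-1}d = (\ell+1)(\|d\|_2^2 + (\sum_j d_j)^2)$ reproduces the exponent in \eqref{eq:26}. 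Two small points worth making explicit in a write-up: (i) the position-dependent error $o(n^{-\ell/2}\min(1, n/\|d\|_2^2))$ does need the refined, polynomially-weighted form of the lattice local CLT (BR10, Thm.~22.1 / Cor.~22.3 with $s\ge 4$, available since $W_i$ is bounded), not merely the uniform $o(n^{-\ell/2})$ version; and (ii) in passing from \eqref{eq:26} to the lower bound of \eqref{eq:23}, the $o(1)$ term must be read as signed (possibly negative) — you should first use $(\sum_j d_j)^2 \le \ell\|d\|_2^2$ to absorb the cross term into the constant $C$, then note that the error $o(n^{-\ell/2}\min(1,n/\|d\|_2^2))$ is in particular $o(n^{-\ell/2})$ uniformly, so it can simply be swept into the $o(1)$ inside the parentheses. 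With those clarifications your proposal is complete and is the intended derivation.
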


\section{Restricted Progressions of Length Three}
\label{sec:three}

In this section we prove the same-set case of Theorem~\ref{thm:removal}
for $q=3$.
For simplicity of exposition we additionally assume that
$n$ is divisible by six.
We first show that our result is implied by Theorem~\ref{thm:two-dimensional}
and then prove Theorem~\ref{thm:two-dimensional}
via the triangle removal lemma. Let us start with the statement
of the theorem.
\begin{theorem}
  \label{thm:symmetric-hitting}
  Let $X, Y, Z \in \mathbb{Z}_3^n$ be a random arithmetic progression
  with difference restricted to lie in $\{0,1\}^n$, where $n$ is a multiple of six.
  For every symmetric set $S \subseteq \mathbb{Z}_3^n$ with density
  $\mu(S) \ge \mu > 0$ we have
  \begin{align}
    \label{eq:05}
    \Pr \left[ X \in S \land Y \in S \land Z \in S \right] \ge \delta(\mu) > 0 \; .
  \end{align}
\end{theorem}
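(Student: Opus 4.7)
My plan is to derive Theorem~\ref{thm:symmetric-hitting} from Theorem~\ref{thm:two-dimensional} via a reduction based on the local limit theorem (Theorem~\ref{cor:lclt}). Since $S$ is symmetric, membership depends only on the weight pair $(w_0(x),w_1(x))$ (the third weight being redundant), so $S$ corresponds to a set of weight pairs, which after centering by $(n/3,n/3)$ I denote $R^*\subseteq\mathbb{Z}^2$. The first task is to identify the linear relation that the centered weight tuples $(a_0,a_1),(b_0,b_1),(c_0,c_1)$ of $X,Y,Z$ satisfy.

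Each coordinate $i$ is independently of one of six types, indexed by $(X_i,D_i)\in\mathbb{Z}_3\times\{0,1\}$. Let $n_{a,d}$ count the coordinates of type $(a,d)$, and set $m_{a,d}:=n_{a,d}-n/6$ (an integer since $n$ is divisible by $6$). Using $Y=X+D$, $Z=X+2D$, direct computation gives $w_0(X)-n/3=m_{0,0}+m_{0,1}$, $w_0(Y)-n/3=m_{0,0}+m_{2,1}$, $w_0(Z)-n/3=m_{0,0}+m_{1,1}$, and analogous formulas for the $w_1$-weights. Using $\sum_{a,d}m_{a,d}=0$, a short manipulation yields
\begin{align*}
c_0=a_0+a_1-b_1,\qquad c_1=b_0+b_1-a_0,
\end{align*}
which is precisely the linear identity appearing in Theorem~\ref{thm:two-dimensional}.

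Fix $N=K\sqrt n$ for a constant $K=K(\mu)$ to be chosen. Applying Theorem~\ref{cor:lclt} to the $5$-dimensional multinomial of type-counts (and summing over the $1$-dimensional kernel of the linear map to $(w_X,w_Y)$) yields two estimates: (i)~$\Pr\bigl[(w_X-n/3,w_Y-n/3)=(a,b)\bigr]\ge c_1(K)/n^2$ uniformly for $(a,b)\in[-N,N]^2\times[-N,N]^2$; and (ii)~$\Pr[w_X-(n/3,n/3)\notin[-N,N]^2]\le\mu/2$ once $K$ is large enough in terms of $\mu$. Combining (ii) with the pointwise upper bound $\Pr[w_X-(n/3,n/3)=w]=O(1/n)$ from Theorem~\ref{cor:lclt}, the set $R:=R^*\cap[-N,N]^2$ satisfies $|R|=\Omega(\mu n)$, so it has density at least some $c_2(\mu)>0$ inside $[-N,N]^2$.

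Finally I apply Theorem~\ref{thm:two-dimensional} to $R$: it produces at least $\delta(c_2)\cdot(2N+1)^4$ quadruples $(a,b)\in[-N,N]^4$ such that $(a_0,a_1),(b_0,b_1),(a_0+a_1-b_1,b_0+b_1-a_0)$ all lie in $R\subseteq R^*$. For each such quadruple the identity above guarantees that any restricted progression $(X,Y,Z)$ with the prescribed centered weight tuples satisfies $X,Y,Z\in S$, so summing~(i) over these quadruples gives
\begin{align*}
\Pr[X,Y,Z\in S]\ge\frac{c_1(K)}{n^2}\cdot\delta(c_2)\cdot(2N+1)^4=\Omega\bigl(\delta(c_2(\mu))\bigr),
\end{align*}
a positive constant depending only on $\mu$. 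The main obstacle is essentially bookkeeping: ensuring that the local CLT constants $c_1,c_2$ are $n$-independent and that the linear relation between the $(w_X,w_Y,w_Z)$-shifts and Theorem~\ref{thm:two-dimensional} match exactly. All substantive combinatorial content has been deferred to Theorem~\ref{thm:two-dimensional}, which is proved separately by a triangle removal argument.
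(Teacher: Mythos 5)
Your proposal is correct and follows essentially the same route as the paper: reduce membership to centered weight pairs, derive the linear constraint $c_0=a_0+a_1-b_1$, $c_1=b_0+b_1-a_0$ from the fact that the difference lies in $\{0,1\}^n$, use the local CLT both for the pointwise upper bound (to get $|R|=\Omega(\mu n)$, hence $\Omega(\mu)$ density in $[-N,N]^2$) and for the lower bound $\Pr[W=w]\gtrsim n^{-2}$ on feasible weight arrangements (obtained by summing the $5$-dimensional local CLT estimate over the one-dimensional kernel), and finally invoke Theorem~\ref{thm:two-dimensional}. The only cosmetic difference is that the paper isolates the lower-bound/feasibility statement as Lemma~\ref{lem:admissible-probs}, whereas you inline it.
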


The crucial part of the proof is a lemma that
characterizes which weight arrangements are likely to be sampled
in a random restricted progression.
For this purpose, it is useful to introduce two random tuples.
The first one is
\begin{align*}
  M := (M_{000}, M_{111}, M_{012}, M_{120}, M_{201}) \; ,
\end{align*}
where $M_{abc} := \left|\{i \in [n]: (X_i, Y_i, Z_i) = (a,b,c)\}\right| - n/6$
for $(a,b,c) = (x,x+d,x+2d), x\in\mathbb{Z}_3,d\in\{0,1\}$.
That is, the tuple $M$ expresses normalized counts of six restricted
progressions across $n$ coordinates.
Note that $M_{222}$ is omitted, since it can be inferred from the remaining
components of $M$.

The second random tuple represents weight arrangements of elements
of the restricted progression:
\begin{align*}
  W := (W_{X}, W_{Y}, W_{Z}) := (w_0(X), w_1(X), w_0(Y), w_1(Y), w_0(Z), w_1(Z)) -
  (n/3,\ldots,n/3) \; .
\end{align*}
Again, we omit weights $w_2(\cdot)$, since they can be deduced from the
rest. Note that $W$ is determined by $M$, but, as it turns out,
not the other way around.

\begin{lemma}
  \label{lem:admissible-probs}
  Let $(X, Y, Z)$ be a random restricted progression
  with $n$ divisible by six.
  Let
  $w := \allowbreak(x_0, x_1, y_0,\allowbreak y_1, z_0, z_1)
  \allowbreak \in [-N, N]^6$ with
  $N = C_1\sqrt{n}$ for some $C_1 > 0$. Then,
  \begin{align*}
    \Pr[W=w]\text{ is } \begin{cases}
      \text{at least } C_2/N^4 &\text{ if } (z_0, z_1) = (x_0+x_1-y_1, y_0+y_1-x_0) \; ,\\
      0 &\text{ otherwise,}
      \end{cases}
  \end{align*}
  for some $C_2 := C_2(C_1) > 0$ and $N$ large enough (also depending on $C_1$).
\end{lemma}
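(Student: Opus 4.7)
The plan is to reduce the statement to a direct application of the local limit theorem (Theorem~\ref{cor:lclt}). For each coordinate $i$, the triple $(X_i,Y_i,Z_i)$ is uniform over the six-element set $\{000,111,222,012,120,201\}$. Let $K_{abc}$ denote the number of coordinates in each of these six classes, and consider the centered vector
\[
M \;:=\; (K_{000},K_{111},K_{012},K_{120},K_{201}) - (n/6)\cdot(1,\ldots,1),
\]
which is exactly the kind of sum of $n$ i.i.d.\ indicator-type vectors covered by Theorem~\ref{cor:lclt} with $\ell=5$.

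For the ``otherwise'' part, a one-coordinate check shows that in every realization
\[
w_0(X)+w_1(X)-w_1(Y) \;=\; w_0(Z), \qquad w_0(Y)+w_1(Y)-w_0(X) \;=\; w_1(Z),
\]
so after subtracting $n/3$ from each side one recovers precisely the two linear constraints of the lemma, and $\Pr[W=w]=0$ whenever either of them fails.

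When the constraints do hold, I would parametrize the type-count tuples $(K_{000},\ldots,K_{201})$ realizing $w$ by the single integer $t := K_{000}$, solving the linear system relating the weights $w_a(\cdot)$ to the $K_{abc}$ in closed form (e.g.\ $K_{012}=n/3+x_0-t$, $K_{111}=t+x_1-z_0$, $K_{222}=t-x_0-x_1-y_0$, and similarly for the remaining three entries). Consistency of this system uses exactly the two imposed constraints, and the free parameter reflects the one-dimensional kernel of $K\mapsto W$ spanned by $(1,1,1,-1,-1,-1)$. Non-negativity of the six counts restricts $t$ to an interval around $n/6$ of length $n/3-O(\sqrt{n})$.

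Finally, for each admissible $t$ I would write $t=n/6+s$; the five coordinates of $m(t)$ are then of the form $\pm s$ plus corrections of size $O(\sqrt{n})$, so $\|m(t)\|_2^2 = O(s^2+n)$. Restricting to $|s|\le\sqrt{n}$ keeps all counts safely positive under $\|w\|_\infty \le C_1\sqrt{n}$ and leaves $\Omega(\sqrt{n})$ admissible values of $t$; the lower bound in \eqref{eq:23} then yields $\Pr[M=m(t)] = \Omega(n^{-5/2})$ for each, whence
\[
\Pr[W=w] \;\geq\; \sum_{t} \Pr[M=m(t)] \;=\; \Omega\!\left(\sqrt{n}\cdot n^{-5/2}\right) \;=\; \Omega(N^{-4}).
\]
The only genuinely analytic ingredient is this invocation of Theorem~\ref{cor:lclt}; the rest is linear algebra. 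The main obstacle I anticipate is purely bookkeeping: verifying the explicit $K_{abc}(t)$ formulas without a sign error and confirming the feasibility window for $t$.
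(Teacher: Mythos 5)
Your proposal is correct and mirrors the paper's own proof: you center the same five type-counts, derive the same two linear constraints for the ``otherwise'' case, parametrize the fiber of $m\mapsto w$ by the single free integer parameter (your $t=K_{000}$ is the paper's $k+n/6$), and sum the local-CLT lower bound over $\Omega(\sqrt n)$ admissible values to get $\Omega(n^{-2})=\Omega(N^{-4})$. The only cosmetic differences are the choice of free variable and your explicit check of non-negativity of the counts, which the paper handles implicitly via the ``$N$ large enough'' hypothesis.
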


\begin{proof}
  Let us call a tuple $w$ that satisfies
  $(z_0, z_1) = (x_0 + x_1 - y_1, y_0 + y_1 - x_0)$ \emph{feasible}.
  If $w$ is not feasible, then clearly $\Pr[W=w]=0$, since restricting
  the progression difference to $\{0,1\}^n$ implies that
  \begin{align*}
    w_0(Z) &= M_{000}+M_{120}+n/3=w_0(X)+w_1(X)-w_1(Y)\;,\\
    w_1(Z) &= M_{111}+M_{201}+n/3=w_0(Y)+w_1(Y)-w_0(X)\;.
  \end{align*}
  For a feasible $w$, one can see that a tuple $m$ gives rise to
  the weight arrangement $w$ if and only if it is an integer solution of
  the equation system
  \begin{align*}\begin{cases}
      x_0 = m_{000}+m_{012}\\
      x_1 = m_{111}+m_{120}\\
      y_0 = m_{000}+m_{201}\\
      y_1 = m_{111}+m_{012}
    \end{cases}\;,\end{align*}
  and these solutions are given as
  \begin{align*}
    m = (m_{000}, m_{111}, m_{012}, m_{120}, m_{201}) =
    (k, y_1-x_0+k, x_0-k, x_1-y_1+x_0-k, y_0-k)
  \end{align*}
  for $k \in \mathbb{Z}$. Now we can calculate
  \begin{align}
    \Pr[W = w]
    &= \sum_{k \in \mathbb{Z}} \Pr\left[
      M=(k,y_1-x_0+k, x_0-k, x_1-y_1+x_0-k, y_0-k) \right]
    \nonumber\\
    &\ge \sum_{k = 0}^{\lfloor C_1\sqrt{n}\rfloor} \Pr\left[
      M=(k,y_1-x_0+k, x_0-k, x_1-y_1+x_0-k, y_0-k) \right] \;.
      \label{eq:04}
  \end{align}
  Each tuple $m = m(k)$ in the summation \eqref{eq:04}
  is contained in $[-4N, 4N]^5$ and therefore satisfies
  $\|m\|_2^2 = O(C_1^2\cdot n)$.
  Applying lower bound in \eqref{eq:23} to $M$ and $m$,
  each term in the summation \eqref{eq:04} must be
  at least $C/n^{5/2}$, where $C$ depends on $C_1$ and $n$ is large enough.
  Finally, summing up over $k$ we get
  \begin{align*}
    \Pr[W = w] \ge \frac{C}{n^2} = \frac{C_2}{N^4} \; ,
  \end{align*}
  as claimed.
\end{proof}

\begin{proof}[Theorem~\ref{thm:two-dimensional} implies
  Theorem~\ref{thm:symmetric-hitting}]
  Let $\mu > 0$ and $S \subseteq \mathbb{Z}_3^n$ be a symmetric set
  with $\mu(S) \ge \mu > 0$. Note that we can assume that $n$ is big enough.
  
  Recall the random tuple $W = (W_X, W_Y, W_Z)$ and for $x \in \mathbb{Z}_3^n$ let
  $W_x := (w_0(x)-n/3, w_1(x)-n/3)$.
  Since $S$ is symmetric, there exists a set $R := R(S) \subseteq \mathbb{Z}^2$
  such that $x \in S$ if and only if $W_x \in R$.
  Since, by a standard concentration bound, we can find $C(\mu)$ such that
  \begin{align*}
    \Pr\big[\left\|W_{X}\right\|_{\infty} > C\sqrt{n} \big]
    < \mu/2 \; ,
  \end{align*}
  from now on we will assume w.l.o.g.~that $R \subseteq [-N, N]^2$
  for $N := C\sqrt{n}$.

  Observe that, due to upper bound in \eqref{eq:23} applied to random
  variable $W_X$, for each $w \in R$ we have
  \begin{align*}
    \Pr\left[W_{X} = w \right] \le O(1/n)\; ,
  \end{align*}
  and therefore $|R| = \Omega(\mu/n)$, implying
  $\frac{|R|}{(2N+1)^2} \ge c(\mu) > 0$. Now, Theorem~\ref{thm:two-dimensional}
  gives
  \begin{align*}
    \frac{\#\left\{
    (x_0, x_1, y_0, y_1): (x_0, x_1) \in R \land (y_0, y_1) \in R
    \land (x_0+x_1-y_1, y_0+y_1-x_0) \in R
    \right\}}
    {(2N+1)^4}\\
    \ge c(\mu) > 0 \; ,
  \end{align*}
  but that, due to Lemma~\ref{lem:admissible-probs}, yields
  \begin{IEEEeqnarray*}{l}
    \Pr[X \in S \land Y \in S \land Z \in S]
    =\Pr[W_{X} \in R \land W_{Y} \in R \land W_{Z} \in R]\\
    \qquad=
    \sum_{\substack{(x_0,x_1)\in R\\(y_0,y_1)\in R}}
    \mathbbm{1}_R(x_0+x_1-y_1,y_0+y_1-x_0)
    \Pr\left[W = (x_0,x_1,y_0,y_1,x_0+x_1-y_1,y_0+y_1-x_0)\right]\\
    \qquad\ge c(\mu) \cdot N^4 \cdot \frac{C_2(\mu)}{N^4} \ge \delta(\mu) > 0 \; .
  \end{IEEEeqnarray*}
\end{proof}

We proceed to the proof of Theorem~\ref{thm:two-dimensional}, which we
restate here for convenience.
\twodimensional*

The proof is a variation on
the triangle removal proof of Roth's theorem. Let us start by stating the
removal lemma:
\begin{theorem}[Triangle removal lemma]
  For every $\eps > 0$ there exists $\delta = \delta(\eps) > 0$
  such that if a simple graph $G = (V, E)$ contains at most $\delta |V|^3$
  triangles, then it is possible to make $G$ triangle-free by removing
  from it at most
  $\eps|V|^2$ edges.
\end{theorem}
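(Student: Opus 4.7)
The plan is to derive the triangle removal lemma from Szemer\'edi's regularity lemma in the standard way. Given $\eps > 0$, I fix an auxiliary parameter $\eps' > 0$ of order $\eps/8$ and a lower bound $k_0 \ge 1/\eps'$ on the number of parts. Applying the regularity lemma to $G$ with these parameters yields an equitable partition $V(G) = V_0 \sqcup V_1 \sqcup \cdots \sqcup V_k$ where $|V_0| \le \eps'|V|$, the nonexceptional parts $V_1, \ldots, V_k$ share a common size, $k \le M(\eps')$ is bounded in terms of $\eps'$, and all but at most $\eps'\binom{k}{2}$ of the pairs $(V_i, V_j)$ are $\eps'$-regular.

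Next, I form a subgraph $G' \subseteq G$ by deleting four families of edges: (i) every edge touching $V_0$, (ii) every edge lying inside a single part $V_i$, (iii) every edge across an $\eps'$-irregular pair, and (iv) every edge across a pair $(V_i, V_j)$ whose edge density is below $\eps'$. Summing the contributions, the total number of deleted edges is bounded by $\eps'|V|^2 + |V|^2/k + \eps'|V|^2 + \eps'|V|^2 \le \eps|V|^2$, once $\eps'$ and $k_0$ are chosen appropriately.

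The core step is to argue that $G'$ must be triangle-free whenever $G$ contains fewer than $\delta |V|^3$ triangles, for a suitable $\delta = \delta(\eps) > 0$. Suppose instead that $G'$ has a triangle with vertices $u \in V_i$, $v \in V_j$, $w \in V_\ell$; then necessarily $i, j, \ell \ge 1$ are distinct, and each of the three pairs $(V_i, V_j)$, $(V_j, V_\ell)$, $(V_i, V_\ell)$ is $\eps'$-regular with density at least $\eps'$. I then invoke the standard triangle counting consequence of regularity: first discard the at most $2\eps'|V_i|$ vertices of $V_i$ whose degree into $V_j$ or $V_\ell$ deviates by more than $\eps'|V_j|$ or $\eps'|V_\ell|$ from the expected value, then for each surviving $u' \in V_i$ apply $\eps'$-regularity of $(V_j, V_\ell)$ to the two neighborhoods $N(u') \cap V_j$ and $N(u') \cap V_\ell$, which are large enough to meet the regularity threshold. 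This produces at least $(1 - O(\eps'))(\eps')^3 |V_i||V_j||V_\ell| = \Omega((\eps'/k)^3)|V|^3$ triangles in $G$, so choosing $\delta$ below this bound forces the needed contradiction.

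The main obstacle is executing the triangle counting step carefully: regularity is used three times, and one must check that the neighborhoods $N(u') \cap V_j$ and $N(u') \cap V_\ell$ remain of size at least $\eps'|V_j|$ and $\eps'|V_\ell|$ so that the regularity of $(V_j, V_\ell)$ actually applies to them. This reduces to the elementary fact that in an $\eps'$-regular pair $(A, B)$ of density $d \ge \eps'$, all but at most $\eps'|A|$ vertices of $A$ have between $(d-\eps')|B|$ and $(d+\eps')|B|$ neighbors in $B$. The resulting $\delta(\eps)$ inherits the tower-type growth of the regularity lemma in $1/\eps$, which suffices for the qualitative conclusion stated in the lemma.
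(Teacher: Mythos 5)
The paper does not prove this statement: the triangle removal lemma is quoted as a classical black box (Ruzsa--Szemer\'edi) and is only \emph{used}, in the proof of Theorem~\ref{thm:two-dimensional}. So there is no proof in the paper to compare against; what you have written is the standard derivation from Szemer\'edi's regularity lemma, and it is essentially the correct and canonical argument: regularize, clean up the four families of negligible edges, and then observe that any surviving triangle sits on a triple of dense regular pairs and therefore forces $\Omega_{\eps}(|V|^3)$ triangles by the counting lemma, contradicting the hypothesis once $\delta$ is small enough in terms of the (tower-type) bound $M(\eps')$ on the number of parts.

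One quantitative slip needs fixing. In step (iv) you delete edges across pairs of density below $\eps'$, so a surviving pair is only guaranteed density $d \ge \eps'$. The typical-degree fact you invoke then gives $|N(u') \cap V_j| \ge (d-\eps')|V_j|$, which can be as small as $0$ when $d = \eps'$; in that case the sets $N(u')\cap V_j$ and $N(u')\cap V_\ell$ need not meet the size threshold $\eps'|V_j|$, $\eps'|V_\ell|$ required to apply $\eps'$-regularity of $(V_j, V_\ell)$, and the counting step collapses. The standard remedy is to delete edges across pairs of density below $2\eps'$; then $d - \eps' \ge \eps'$, the neighborhoods are large enough, the pair $(N(u')\cap V_j,\ N(u')\cap V_\ell)$ has density at least $d-\eps' \ge \eps'$, and you get at least $(1-2\eps')(\eps')^3|V_i||V_j||V_\ell|$ triangles. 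The edge count in your cleanup step only changes by a factor of $2$ in one of the four terms, so the conclusion is unaffected after adjusting $\eps'$ to, say, $\eps/8$ as you already do.
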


\begin{proof}[Proof of Theorem~\ref{thm:two-dimensional}]
  Let $N \in \bbN$ and $R \subseteq [-N, N]^2$ with density
  $\mu(R) \ge \mu > 0$.
  As before, we will call a triple of points
  $(a_1, b_1), (a_2, b_2), (a_3, b_3) \in \mathbb{Z}^2$
  \emph{feasible} if $(a_3, b_3) = (a_1+b_1-b_2, a_2+b_2-a_1)$.
  We define a tripartite graph $G$ as follows:
  \begin{itemize}
  \item There are three groups of vertices $V_1, V_2, V_3$. In each group
    the vertices are labeled with elements of $[-M, M]^2$ for $M := 3N$.
    Note that the total number of vertices of $G$ is $|V| = 3(2M+1)^2$.
  \item Edge adjacency is defined by:
    \begin{align}
      V_1 \ni (i_a, i_b) \sim (j_a, j_b) \in V_2
      &\text{ iff }(a_1, b_1) := (i_b-j_b, j_a-i_a+j_b-i_b) \in R \; ,
      \label{eq:01}\\
      V_2 \ni (j_a, j_b) \sim (k_a, k_b) \in V_3
      &\text{ iff }(a_2, b_2) := (k_a-j_a+k_b-j_b, j_a-k_a) \in R \; ,
      \label{eq:02}\\
      V_1 \ni (i_a, i_b) \sim (k_a, k_b) \in V_3
      &\text{ iff }(a_3, b_3) := (k_a-i_a, k_b-i_b) \in R \; .
        \label{eq:03}
    \end{align}
  \end{itemize}
  Given a triple of vertices $(i_a, i_b), (j_a, j_b), (k_a, k_b)$, we
  associate with it a triple of points
  $(a_1, b_1), (a_2, b_2),\allowbreak (a_3, b_3) \in \mathbb{Z}^2$
  given by the right-hand sides of equations in \eqref{eq:01} to \eqref{eq:03}.
  One checks that this triple of points is feasible. Furthermore, by
  definition,
  whenever $(i_a, i_b), (j_a, j_b), (k_a, k_b)$ form a triangle,
  the points $(a_1, b_1), (a_2, b_2),\allowbreak (a_3, b_3)$ all belong to $R$.

  Conversely, given a point $(a, b) \in R$, we can see that each triple
  of vertices $(i_a, i_b), (i_a+a+b, i_b-a), (i_a+a, i_b+b)$
  for $(i_a, i_b) \in [-N, N]^2$ forms a triangle.
  Therefore, the graph $G$ contains at least
  $\mu\cdot (2N+1)^4 \ge \mu\cdot \left(\frac{2M+1}{3}\right)^4 = \frac{\mu}{3^6}|V|^2$
  triangles. Furthermore, it is clear that all those triangles are edge-disjoint.
  Hence, $G$
  requires at least $\frac{\mu}{3^6}|V|^2$
  edge deletions to become triangle-free and, by triangle removal lemma,
  contains at least $\delta(\mu)|V|^3$ triangles.

  Finally, we note that each feasible triple of points
  $(a_1, b_1), (a_2, b_2), (a_3, b_3) \in R$
  gives rise to at most $(2M+1)^2$ triangles. This is because each vertex
  $(i_a, i_b) \in V_1$ determines at most one triangle associated with this triple.
  Since $G$ contains at least $\delta|V|^3$ triangles, the number of
  feasible triples contained in $R$ must be at least
  \begin{align*}
    \frac{\delta|V|^3}{(2M+1)^2} = 27\delta (2M+1)^4 \ge \delta(2N+1)^4 \; ,
  \end{align*}
  but this means
  \begin{align*}
    \Pr\left[
    (A_1, B_1) \in R \land (A_2, B_2) \in R \land
    (A_1+B_1-B_2, A_2+B_2-A_1) \in R
    \right] \ge \delta > 0 \; ,
  \end{align*}
  as we wanted.
\end{proof}

\section{Proof of Theorem~\ref{thm:poly}}
\label{sec:poly}

In this section we prove Theorem~\ref{thm:poly}.

As a very preliminary point, note that it suffices to consider only densities
$\mu \le 1-1/p$ that are bounded away from one.
If $\mu$ is greater than $1-1/p$, one can, for example,
take $T^{(j)} := S^{(j)} \cap \{x: \sum_{i=1}^n x_i = 0\pmod{p} \}$
for $j\ne p$ and $T^{(p)} := S^{(p)} \cap \{x:\sum_{i=1}^nx_i=1\pmod{p}\}$
in order to obtain progression-free sets $T^{(1)},\ldots,T^{(p)}$.

As in the proof of Theorem~\ref{thm:symmetric-hitting}, we start with observing
that there exist sets $R^{(1)},\ldots,R^{(p)} \subseteq \mathbb{F}^{p-1}$
such that $x\in S^{(j)}$ if and only if
$W(x) := (w_0(x)-p\lfloor n/p^2 \rfloor,\ldots,w_{p-2}(x)-
p\lfloor n/p^2\rfloor)\in R^{(j)}$
(the choice of the $p\lfloor n/p^2\rfloor$ shift will become apparent
in the next paragraph).
Furthermore, using standard concentration
bound
\begin{align*}
  \Pr\left[ \left|w_a(X)-n/p\right| > tn \right] \le 2\exp(-2nt^2)
\end{align*}
for any fixed $a\in\mathbb{F}_p$ together with the union bound,
we also establish that there exists some $C_1 > 0$ such that
\begin{align*}
  \Pr\left[ \left\|W(X^{(j)})\right\|_{\infty} >
  C_1\sqrt{n\cdot\ln 1/\mu} \right] \le \frac{\mu}{2} \; ,
\end{align*}
and therefore we can remove from each of $S^{(1)},\ldots,S^{(p)}$
a symmetric set of density at most $\mu/2$ and assume from now on
that the weight sets $R^{(1)},\ldots,R^{(p)}$ have limited range:
If $w\in R^{(j)}$, then $\|w\|_{\infty} \le C_1\sqrt{n\cdot\ln 1/\mu}$.

For $a,d\in \mathbb{F}_p$, define random variables
\begin{align*}
  M(a,d) := \left|\left\{i\in [n]: x^{(1)}_i = a \land x^{(2)}_i-x^{(1)}_i = d
  \right\}\right| - \lfloor n/p^2 \rfloor \; .
\end{align*}
Consider the random tuple $M$ consisting of $p^2-1$ coordinates $M(a,d)$
except for $M(p-1,0)$.
We note for future reference that $M$ can be written as a sum
of i.i.d.~random tuples $M=\sum_{i=1}^n M_i$ such that
Theorem~\ref{cor:lclt} is applicable.
We also note that there exists a matrix
$A \in \{0,1\}^{p(p-1)\times (p^2-1)}$ such that letting
$W := (W(X^{(1)}),\ldots,W(X^{(p)}))$ we can write a linear system of equations
$W = AM$.

At this point we need to understand how solutions to the system $W=AM$ look
like. This is done in the following lemma:
\begin{lemma}\label{lem:linear-structure}
  A general solution to the equation system $w=Am$ for $w\in\mathbb{R}^{p(p-1)}$
  is given by
  \begin{align}\label{eq:24}
    m = \frac{1}{p}Bw + K\cdot \mathbb{R}^{p-1}\;
  \end{align}
  for some \emph{integer-valued} matrices $B\in\mathbb{Z}^{(p^2-1)\times p(p-1)}$
  and $K\in\mathbb{Z}^{(p^2-1)\times (p-1)}$. In particular, matrix $A$ has full
  rank over reals.

  Furthermore, if  the vector $\frac{1}{p}Bw$
  is not integer for some integer $w\in\mathbb{Z}^{p(p-1)}$,
  then the system $w=Am$ does not have an
  integer solution.
\end{lemma}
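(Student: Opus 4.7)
The plan is to analyze the equation system $Am = w$ directly. Writing the rows of $A$ explicitly, the system reads $W_{(j,b)} = \sum_{d \in \mathbb{F}_p} M(b-(j-1)d, d)$ for $j \in \{1,\ldots,p\}$ and $b \in \{0,\ldots,p-2\}$. First I would identify a family of integer kernel vectors: for any $f\colon \mathbb{F}_p \to \mathbb{R}$ with $\sum_d f(d) = 0$, setting $M(a,d) := f(d)$ gives $\sum_d M(b-(j-1)d, d) = \sum_d f(d) = 0$, so $M \in \ker A$. Taking the integer basis $f_k(d) := \mathbbm{1}[d=k] - \mathbbm{1}[d=0]$ for $k=1,\ldots,p-1$ yields $p-1$ linearly independent integer kernel vectors $M_{f_k}$ that will form the columns of $K$. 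A key structural property is that $M_{f_k}(a,d) = \mathbbm{1}[d=k]$ for $d \geq 1$ and $M_{f_k}(a,0) = -1$ (for $a \ne p-1$); consequently $(K\lambda)(a,d) = \lambda_d$ for $d \geq 1$ and $(K\lambda)(a,0) = -\sum_k \lambda_k$, so $K\lambda \in \mathbb{Z}^{p^2-1}$ iff $\lambda \in \mathbb{Z}^{p-1}$.

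To build $B$, I would exhibit an explicit particular solution formula via a Fourier-style inversion. Extend $w$ to $w'\colon \{1,\ldots,p\}\times \mathbb{F}_p \to \mathbb{R}$ by setting $w'(j, p-1) := -\sum_{b=0}^{p-2} w(j,b)$, so that $\sum_b w'(j,b) = 0$, and define $(B_0 w)(a,d) := \sum_{j=1}^p w'(j, a+(j-1)d)$. A direct calculation then confirms $A \cdot \tfrac{1}{p} B_0 w = w$: for each $(j,b)$, $\sum_d \tfrac{1}{p} (B_0 w)(b-(j-1)d, d) = \tfrac{1}{p} \sum_{j', d} w'(j', b + (j'-j)d)$, where the inner $d$-sum vanishes when $j' \ne j$ (the map $d \mapsto b + (j'-j)d$ is a bijection on $\mathbb{F}_p$ and $\sum_{b'} w'(j', b') = 0$) and equals $p \cdot w(j, b)$ when $j' = j$ (using $b \ne p-1$ to get $w'(j,b) = w(j,b)$). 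Hence $A$ is surjective, giving $\mathrm{rank}(A) = p(p-1)$ (so $A$ has full real rank) and, by rank-nullity, $\dim \ker A = p-1$, so $m = \tfrac{1}{p}B_0 w + K\lambda$ with $\lambda \in \mathbb{R}^{p-1}$ parametrizes all solutions.

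For the \emph{furthermore} claim I would modify $B_0$ to enforce the gauge $\tfrac{1}{p}(Bw)(p-1, d) = 0$ for all $d \geq 1$, by setting $(Bw)(a, d) := (B_0 w)(a, d) - (B_0 w)(p-1, d)$ for $d \geq 1$ and $(Bw)(a, 0) := (B_0 w)(a, 0) + \sum_{d=1}^{p-1} (B_0 w)(p-1, d)$. The matrix $B$ remains integer because it is an integer combination of entries of the integer matrix $B_0$, and at the real level this modification is exactly the subtraction of $K \lambda^*$ with $\lambda^*_d := \tfrac{1}{p}(B_0 w)(p-1, d)$. Now, given any integer solution $m_0$ of $Am = w$, write $m_0 = \tfrac{1}{p} Bw + K \lambda$ for the unique $\lambda \in \mathbb{R}^{p-1}$; evaluating at $(a, d) = (p-1, d)$ for $d \geq 1$ gives $m_0(p-1, d) = 0 + \lambda_d$, so each $\lambda_d \in \mathbb{Z}$. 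By the structural property of $K$, this forces $K\lambda$ to be integer, and therefore $\tfrac{1}{p} Bw = m_0 - K\lambda \in \mathbb{Z}^{p^2-1}$. The contrapositive is the furthermore claim.

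The main obstacle I anticipate is keeping the gauge-fixing consistent with $B$ being integer valued. The natural adjustment $m^* \mapsto m^* - K\lambda^*$ uses $\lambda^*_d = \tfrac{1}{p}(B_0 w)(p-1, d)$, which is in general not integer, so one cannot simply subtract integer kernel vectors from $B_0$. The trick is to perform the subtraction at the level of $B$ itself (subtracting the integer quantity $(B_0 w)(p-1, d)$), which upon division by $p$ realizes precisely the desired real shift $K \lambda^*$ while preserving integrality of $B$; once this is set up, the structural property of $K$ makes the integrality equivalence essentially immediate.
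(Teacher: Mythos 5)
Your proof is correct and follows the same overall strategy as the paper: write down an explicit integer particular-solution matrix, identify the $(p-1)$-dimensional integer kernel (the depends-only-on-$d$ vectors with zero row sum), and then gauge-fix the particular solution so that integrality of $m$ forces integrality of the kernel coefficients and hence of $\tfrac{1}{p}Bw$. The one place you diverge is in how the particular solution is produced. The paper constructs $m_{j,a}$ by hand via a five-case table of values in $\{0,\pm\tfrac{1}{p},\tfrac{2}{p},-\tfrac{p-1}{p},-\tfrac{p-2}{p}\}$ and verifies it coordinate-by-coordinate, whereas you obtain $B_0$ from the averaging formula $(B_0 w)(a,d)=\sum_{j}w'(j,a+(j-1)d)$ after zero-padding $w$ at the $(j,p-1)$ slots, and the verification collapses to the observation that $d\mapsto b+(j'-j)d$ is a bijection of $\mathbb{F}_p$ when $j'\neq j$. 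The resulting column (values in $\{0,\pm\tfrac{1}{p}\}$) differs from the paper's $m_{j,a}$ by the kernel element $\tfrac{1}{p}K(1,\ldots,1)$, so both are valid; yours is cleaner to state and check. A small cosmetic difference: you gauge-fix at coordinates $(p-1,d)$ while the paper uses $(0,d)$, but the argument is identical. No gaps.
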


\begin{proof}
  We start by showing that matrix $A$ has full rank with a solution given
  as $m = \frac{1}{p}B'w$ for some $B'\in\mathbb{Z}^{(p^2-1)\times p(p-1)}$.
  To do this, we explicitly construct the columns of $B'$. That is, for
  every $j\in[p]$ and $a \in \mathbb{F}_p, a\ne p-1$,
  we find a solution $m_{j,a}\in\mathbb{R}^{p^2-1}$ to equation
  $w_{j,a} = Am_{j,a}$  where $w_{j,a}$ has value one at the coordinate
  corresponding to $W_a(x^{(j)})$ and zero everywhere else. Furthermore,
  this solution satisfies $p\cdot m_{j,a} \in \mathbb{Z}^{p^2-1}$.
  We give $m_{j,a}$ as:
\begin{align}\label{eq:25}
  m_{j,a}(b,d) := \begin{cases}
    -(p-2)/p&\text{if }b=a,d=0\;,\\
    -(p-1)/p&\text{if }b\ne a,d=0\;,\\
    2/p&\text{if }d\ne 0,b+(j-1)d=a\;,\\
    1/p&\text{if }d\ne 0,b+(j-1)d\notin\{a,p-1\}\;,\\
    0&\text{if }d\ne 0,b+(j-1)d=p-1\;.
    \end{cases}
\end{align}
As a sanity check we can convince ourselves that $m_{j,a}$ features one
coordinate with value $-(p-2)/p$, $p-2$ coordinates with value
$-(p-1)/p$, $p-1$ coordinates with value $2/p$,
$(p-1)(p-2)$ coordinates with value $1/p$ and $p-1$ coordinates with value zero.
Indeed, we have $w_{j,a} = Am_{j,a}$ as can be seen by indexing
coordinates of $w=Am_{j,a}$ as $w(j', b)$ with
$j'\in[p],b\in\mathbb{F}_p\setminus\{p-1\}$ and computing
\begin{IEEEeqnarray*}{rClr}
  w(j, a)&=&-\frac{p-2}{p} + (p-1)\cdot\frac{2}{p}=1&\\
  w(j, b)&=&-\frac{p-1}{p}+(p-1)\cdot\frac{1}{p}=0
  &\qquad\text{for }b\notin\{a,p-1\}\;,\\
  w(j', a)&=&-\frac{p-2}{p}+(p-2)\cdot\frac{1}{p}=0
  &\qquad\text{for }j'\ne j\;,\\
  w(j', b)&=&-\frac{p-1}{p}+\frac{2}{p}+(p-3)\cdot\frac{1}{p}=0
  &\qquad\text{for }b\notin\{a,p-1\},j'\ne j\;.
\end{IEEEeqnarray*}
By a similar check
we can characterize the $(p-1)$-dimensional kernel of the linear operator
$A$ concluding that $Am = 0$ holds if
\begin{align}\label{eq:20}
  m(b,d) := \begin{cases}
    -\sum_{d'=1}^{p-1}\alpha_{d'}&\text{if }d=0\;,\\
    \alpha_d&\text{if }d\ne 0\;,\\
    \end{cases}
\end{align}
for $\alpha_1,\ldots,\alpha_{p-1}\in\mathbb{R}$. Since matrix $A$ is full rank,
its kernel has dimension $p-1$ and equation \eqref{eq:20} represents
all elements in the kernel.

Combining~\eqref{eq:25} and~\eqref{eq:20} allows us to write
a general solution to $w = Am$ as
\begin{align*}
  m = \frac{1}{p}B'w + K\cdot\mathbb{R}^{p-1}
\end{align*}
for $B'\in\mathbb{Z}^{(p^2-1)\times p(p-1)}$ and $K\in\mathbb{Z}^{(p^2-1)\times (p-1)}$.

As for the ``furthermore'' claim, notice that another general solution
to $w = Am$ can be obtained by adding an arbitrary kernel vector
$Kv$  to one of the columns of $B'$. Applying this observation repeatedly
together with~\eqref{eq:20}, we obtain another integer matrix $B$ such that
we still have the
equation
\begin{align*}
  m = \frac{1}{p}Bw + K\cdot\mathbb{R}^{p-1}
\end{align*}
and, additionally, for every column $b_{j,a}$ of $B$
we have $b_{j,a}(0, d) = 0$ for $d=1,\ldots,p-1$. But this ensures that
a solution $m = \frac{1}{p}Bw + Kv$ has $m(0, d) = v_d$, so
$v$ must be integer in order for $m$ to be integer,
which implies that $\frac{1}{p}Bw$ is integer as well.
\end{proof}

Consider an integer tuple $w\in\mathbb{Z}^{p(p-1)}$ and a tuple
$w\pmod{p} \in\mathbb{F}_p^{p(p-1)}$ which consists of entries of $w$ reduced
modulo $p$.
Since by Lemma~\ref{lem:linear-structure} $w = Am$ has an integer
solution $m\in\mathbb{Z}^{p^2-1}$ if and only if $\frac{1}{p}Bw$ is
integer, in particular this property depends only
on $w\pmod{p}$.

Furthermore, if $w$ with
$\|w\|_{\infty}\le D$ has at least one integer
solution, then taking
$m = \frac{1}{p}Bw + K\cdot[D]^{p-1}$ we get
$D^{p-1}$ integer solutions $m$ with bounded norm $\|m\|_{\infty}\le O(D)$ and,
consequently, $\|m\|_2^2 \le O(D^2)$. Taking
$D = C_1\sqrt{n\ln 1/\mu}$ and
applying lower bound in~\eqref{eq:23} to random tuple $M$,
we see that, for $n$ big enough, we will have for each such $m$
\begin{align*}
  \Pr[M = m] \ge \frac{\mu^C}{n^{(p^2-1)/2}} \;,
\end{align*}
and, as a result,
\begin{align}\label{eq:21}
  \Pr[W = w] \ge \frac{\mu^C}{n^{(p^2-1)/2}}\cdot
  \left(C_1\sqrt{n\ln 1/\mu}\right)^{p-1}\ge
  \frac{\mu^C}{n^{p(p-1)/2}} \;.
\end{align}

To finish the proof, divide each set $S^{(j)}$ into $p^{p-1}$
``congruence classes'' $S_v^{(j)}$ indexed by
$v = (v_0,\ldots,v_{p-2}) \in \mathbb{F}_p^{p-1}$
and defined as
\begin{align*}
  S^{(j)}_v := S^{(j)}\cap
  \left\{x: w_a(x)-p\lfloor n/p^2\rfloor=v_a\pmod{p},\quad a=0,\ldots,p-2
  \right\}\;.
\end{align*}
Now, if $|S_v^{(j)}| \le \frac{\mu}{2p^{p-1}}\cdot p^n$, we remove
$S_v^{(j)}$ from $S^{(j)}$. Clearly, we removed from each $S^{(j)}$ a
symmetric set of density at most $\mu$. The final claim is that
there can be no tuple arrangement $v^{(1)},\ldots,v^{(p)}$ such that:
\begin{enumerate}
\item None of $S^{(j)}_{v^{(j)}}$ has been removed.
\item There exists an integer solution to $v = Am$, where
  $v = (v^{(1)},\ldots,v^{(p)})$.
\end{enumerate}
Otherwise, each of the sets $S^{(j)}_{v^{(j)}}$ has density at least
$\mu/2p^{p-1}$. A set $S^{(j)}_{v^{(j)}}$ is symmetric with the corresponding
set of tuples $R^{(j)}_{v^{(j)}}\subseteq \mathbb{Z}^{p-1}$ bounded by
$\|w^{(j)}\|_{\infty}\le C_1\sqrt{n \ln 1/\mu}$ for $w^{(j)}\in R^{(j)}_{v^{(j)}}$.
Applying Theorem~\ref{cor:lclt} to the random tuple
$W^{(j)} = (W_0^{(j)},\ldots,W_{p-2}^{(j)})$, we see that each such tuple
$w^{(j)}\in R^{(j)}_{v^{(j)}}$ has
\begin{align*}
  \Pr[W^{(j)} = w^{(j)}] \le O\left(\frac{1}{n^{(p-1)/2}}\right)\;,
\end{align*}
and therefore, we can bound the size of $R^{(j)}_{v^{(j)}}$ by
\begin{align*}
  \left|R^{(j)}_{v^{(j)}}\right|\ge\Omega\left(
  \mu\cdot n^{(p-1)/2}
  \right)\;.
\end{align*}
As a result, we get a set of $\Omega\left(\mu^{p}\cdot n^{p(p-1)/2}\right)$
weight arrangements $w = (w^{(1)},\ldots,w^{(p)})$ with
$\|w\|_{\infty} \le C_1\sqrt{n\ln 1/\mu}$
and for each of them
the system $w = Am$ has an integer-valued solution and, by~\eqref{eq:21},
$\Pr[W=w]\ge\frac{\mu^C}{n^{p(p-1)/2}}$, which finally gives us
\begin{align*}
  \Pr\left[X^{(1)}\in S^{(1)}\land\ldots\land X^{(p)}\in S^{(p)}\right] \ge
  \frac{1}{C}\cdot\mu^p\cdot n^{p(p-1)/2}\cdot\frac{\mu^C}{n^{p(p-1)/2}}
  \ge
  \mu^C\;,
\end{align*}
which contradicts assumption~\eqref{eq:22}  if the constant $C$ is chosen large
enough.

We established that there are no ``mod $p$'' weight arrangements
$(v^{(1)},\ldots,v^{(p)})$ that satisfy the two conditions above.
But it follows that there are
no weight arrangements $w = (w^{(1)},\ldots,w^{(p)})$
left in $R^{(1)}\times\cdots\times R^{(p)}$
for which there is an integer
solution to $w = Am$, and therefore no arithmetic progressions left
in the product set $S^{(1)}\times\cdots\times S^{(p)}$, and we are done.
\qed

\begin{remark}
  We make no attempt to precisely estimate the constant $C$ in the exponent,
  but following the argument above one can see that it is bounded by a
  polynomial function of $p$.
\end{remark}

\section{Proof of Theorem~\ref{thm:removal}}
\label{sec:full}

In the following we prove Theorem~\ref{thm:removal}.
We start with some definitions:
\begin{definition}
  For $q \ge 3$ and $n \ge 1$, we let
  \begin{align*}
    \mathcal{P} := \mathcal{P}(q, n) := \left\{
    (x, x+d, \ldots, x+(q-1)d):
    x \in \mathbb{Z}_q^n, d \in \left\{0,1\right\}^n
    \right\}
  \end{align*}
\end{definition}
Note that $\left|\cP(q, n)\right| = (2q)^n$. We will call an element
of $\cP(q, n)$ a \emph{restricted progression}.
We will restate our removal lemma now:
\begin{theorem}
  \label{thm:product-removal}
  For all $\mu > 0$ there exists
  $\delta := \delta(q, \mu) > 0$ such that for all symmetric sets
  $S^{(1)}, \ldots, S^{(q)} \subseteq \mathbb{Z}_q^n$:

  If $S^{(1)} \times \ldots \times S^{(q)}$ contains at most $\delta\cdot(2q)^n$
  restricted progressions, then it is possible to remove a total number
  of at most $\mu q^n$ elements from $S^{(1)}, \ldots, S^{(q)}$ and obtain
  symmetric sets $T^{(1)}, \ldots, T^{(q)}$ such that
  $T^{(1)} \times \ldots \times T^{(q)}$ contains \emph{no}
  restricted progressions.
\end{theorem}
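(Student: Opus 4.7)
The plan is to follow the two-stage strategy that Section~\ref{sec:three} illustrates for $q=3$: I would first apply the local central limit theorem (Theorem~\ref{cor:lclt}) to convert the desired removal lemma into a statement over the integer lattice $\mathbb{Z}^{q-1}$, and then prove that integer statement by a hypergraph removal argument, in direct analogy with the reduction to Theorem~\ref{thm:two-dimensional} and its triangle-removal proof.

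\emph{Stage 1 (CLT reduction to the integers).} Because each $S^{(j)}$ is symmetric, it is determined by a weight set $R^{(j)}\subseteq\mathbb{Z}^{q-1}$ via $x\in S^{(j)}\iff W(x)\in R^{(j)}$, where $W(x)$ collects the centered weights $w_a(x)-q\lfloor n/q^2\rfloor$ for $a=0,\ldots,q-2$. A standard concentration bound lets me discard an $O(\mu)$ symmetric subset and assume $R^{(j)}\subseteq[-N,N]^{q-1}$ with $N=C\sqrt{n\ln(1/\mu)}$. For $a\in\mathbb{Z}_q$ and $d\in\{0,1\}$, let $M(a,d)$ count (after centering) the coordinates $i$ with $X^{(1)}_i=a$ and $X^{(2)}_i-X^{(1)}_i=d$; after dropping one redundant coordinate this gives a random tuple $M\in\mathbb{Z}^{2q-1}$ that fits Theorem~\ref{cor:lclt} as a sum of i.i.d.~indicators. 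The joint weight vector $W=(W(X^{(1)}),\ldots,W(X^{(q)}))$ is an integer-linear image $W=AM$, and a direct computation shows that $\ker A$ is one-dimensional (spanned by the tuple that swaps mass between $M(\cdot,0)$ and $M(\cdot,1)$), so feasible arrangements satisfy $(q-1)(q-2)$ independent integer linear constraints. Mimicking the mod-$p$ split from the proof of Theorem~\ref{thm:poly}, I would partition each $S^{(j)}$ into the residue classes of the image lattice $A\mathbb{Z}^{2q-1}$ and discard the small ones, after which every feasible $w$ with $\|w\|_\infty\le N$ admits $\Omega(N)$ bounded integer lifts, and summing the bound~\eqref{eq:23} gives $\Pr[W=w]\ge\mu^C/n^{q-1}$.

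\emph{Stage 2 (integer removal via hypergraphs).} Stage~1 converts Theorem~\ref{thm:product-removal} into a statement matching Theorem~\ref{thm:feasible-removal}: if $R^{(1)},\ldots,R^{(q)}\subseteq[-N,N]^{q-1}$ each have density at least $\mu$ but the number of feasible $q$-tuples in $R^{(1)}\times\cdots\times R^{(q)}$ is less than $\delta\cdot N^{2(q-1)}$, then feasibility can be destroyed by removing an $O(\mu)$-subset from each $R^{(j)}$. I would prove this by the $q$-partite $(q-1)$-uniform hypergraph removal lemma, generalizing the parametrization~\eqref{eq:01}--\eqref{eq:03}: take vertex classes $V_1,\ldots,V_q$, each a copy of $[-M,M]^{q-1}$ for $M=O(N)$, and for each $(q-1)$-subset $J\subset[q]$ of classes insert a hyperedge whenever a prescribed integer linear combination of the $J$-labels lies in $R^{(\bar J)}$. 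Choosing the combinations so that (i) every $q$-clique of $H$ corresponds to a feasible $q$-tuple in $R^{(1)}\times\cdots\times R^{(q)}$, and (ii) each feasible $q$-tuple is realized by $\Theta(M^{q-1})$ edge-disjoint $q$-cliques, hypergraph removal turns ``few feasible tuples'' into ``few hyperedge deletions destroy all cliques'', which I lift through (ii) and symmetrize to produce the desired $S'$.

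\emph{The main obstacle} is selecting the linear parametrizations in Stage~2 so that (i) and (ii) hold uniformly: for $q=3$ the feasibility space is cut out by a single equation in four coordinates and the labels~\eqref{eq:01}--\eqref{eq:03} are chosen so that any triangle automatically witnesses it, but for larger $q$ one must realize $(q-1)(q-2)$ simultaneous integer constraints across the $q$ hyperedge types, in the spirit of Shapira~\cite{Sha10} and Kr\'al'--Serra--Vena~\cite{KSV12} (though the structured nature of the system arising here should allow a simpler construction, closer to the triangle argument of Section~\ref{sec:three}). A secondary nuisance is the absence of field structure in $\mathbb{Z}_q$: unlike in the proof of Theorem~\ref{thm:poly}, where primality of $p$ makes the residue partition a clean $\mathbb{F}_p^{p-1}$, here the quotient $\mathbb{Z}^{q(q-1)}/A\mathbb{Z}^{2q-1}$ need not be a prime-power group, so the residue-class splitting in Stage~1 must be argued without assuming cyclic structure on the parameter set.
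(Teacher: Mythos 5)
Your two-stage plan (CLT reduction to a linear-equations removal statement over $\mathbb{Z}^{q-1}$, then $(q-1)$-uniform hypergraph removal) matches the paper's proof of Theorem~\ref{thm:product-removal}. However, the ``secondary nuisance'' you raise --- arguing a residue-class split over the non-field $\mathbb{Z}_q$ --- does not in fact arise here, and importing the mod-$p$ machinery of Theorem~\ref{thm:poly} only adds friction. In that theorem the map from count tuples $m$ to weight arrangements carries a $1/p$ factor, so feasibility of $w$ over $\mathbb{R}$ does not imply integer solvability and one must check that $\frac{1}{p}Bw$ is integral; but for restricted progressions the relation $W=AM$ admits an integer triangular parametrization (the paper's equations~\eqref{eq:08}--\eqref{eq:10}), so every integer arrangement satisfying the real linear constraints of Definition~\ref{def:feasible} already has a one-parameter family of integer lifts $m(w;k)$, $k\in\mathbb{Z}$, with no congruence obstruction whatsoever --- so no partition of $S^{(j)}$ into residue classes is needed. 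One further bookkeeping slip: in the $q$-partite $(q-1)$-uniform hypergraph, each feasible arrangement corresponds to exactly $N^{(q-1)(q-2)}$ edge-disjoint simplices (Claim~\ref{cl:feasible-simplex}), not $N^{q-1}$; these coincide only at $q=3$, and the correct exponent is what makes the simplex count $N^{q(q-1)}$ commensurate with $|X|^q$ as the removal lemma requires.
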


As before, the proof consists of two parts: First, we make a CLT argument
reducing Theorem~\ref{thm:product-removal} to a variation on removal lemma for
certain linear equations over $\mathbb{Z}_N^{q-1}$. Then, we apply the hypergraph
removal lemma to establish the linear equation removal property.
To state the removal property over $\mathbb{Z}_N^{q-1}$ we need another
definition specifying the allowed weight arrangements of
restricted progressions.
\begin{definition}
  For $x \in \mathbb{Z}_q^n$, we define the \emph{weight tuple} of $x$ as
  $W(x) := (W_1(x), \ldots, W_{q-1}(x)) \in \mathbb{Z}^{q-1}$, where
  \begin{align*}
    W_{a}(x) := w_{a}(x)-2\lfloor n/2q\rfloor = \left|\left\{
    i \in [n] : x_i = a
    \right\}\right| - 2\lfloor n/2q \rfloor \; .
  \end{align*}
\end{definition}

\begin{definition}
  \label{def:feasible}
  An arrangement of tuples
  $(w^{(1)}_1, \ldots, w_{q-1}^{(1)}), \ldots,
  (w_1^{(q)}, \ldots, w_{q-1}^{(q)})$
  (understood, depending on the context,
  as element of $\mathbb{Z}^{q(q-1)}$ or $\mathbb{Z}_N^{q(q-1)}$) is \emph{feasible}
  if
  \begin{align}
    &\forall j = 3, \ldots, q:\nonumber\\
    &\qquad w_1^{(j)} = \sum_{a=1}^{q-1} w_a^{(j-2)} -
      \sum_{a=2}^{q-1} w_a^{(j-1)}\label{eq:14}\\
    &\qquad \forall a = 2, \ldots, q-1:
      w_a^{(j)} = -w_{a-1}^{(j-2)} + w_{a-1}^{(j-1)} + w_a^{(j-1)}\label{eq:15}
  \end{align}
  For $q \ge 3$ and $N \ge 1$ we let
  $\cE := \cE(q, N) \subseteq \mathbb{Z}_N^{q(q-1)}$ to be the set of all
  feasible arrangements of tuples. 
\end{definition}
Note that $|\cE(q, N)| = N^{2(q-1)}$.
The definition of a feasible tuple is motivated by the following claim,
which can be seen to be true by inspection:
\begin{claim}
  \label{cl:progression-feasible}
  Let $x^{(1)}, \ldots, x^{(q)} \in \mathbb{Z}_q^n$ be a restricted progression.
  Then, the weight arrangement $w(x^{(1)}), \ldots,\allowbreak w(x^{(q)}) \in \mathbb{Z}^{q(q-1)}$
  is feasible.
\end{claim}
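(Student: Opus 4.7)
The plan is to verify the identities \eqref{eq:14} and \eqref{eq:15} directly by writing each weight in closed form. Parameterize the progression as $x^{(j)} = x + (j-1)d$ with $x \in \mathbb{Z}_q^n$ and $d \in \{0,1\}^n$, and split the coordinates according to $d$: set $I_0 := \{i : d_i = 0\}$ and $I_1 := \{i : d_i = 1\}$. For $a \in \mathbb{Z}_q$, define the static count $n_a := |\{i \in I_0 : x_i = a\}|$ and the moving count $m_a := |\{i \in I_1 : x_i = a\}|$. A coordinate in $I_0$ contributes to $w_a(x^{(j)})$ exactly when $x_i = a$, while a coordinate in $I_1$ contributes exactly when $x_i \equiv a - (j-1) \pmod q$. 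This yields the closed form
\begin{align*}
w_a(x^{(j)}) \;=\; n_a \;+\; m_{a - (j-1) \bmod q}.
\end{align*}

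With this formula, identity \eqref{eq:15} for $a \ge 2$ follows by direct substitution into its right-hand side:
\begin{align*}
-(n_{a-1} + m_{a-j+2}) + (n_{a-1} + m_{a-j+1}) + (n_a + m_{a-j+2}) \;=\; n_a + m_{a-j+1},
\end{align*}
which equals $w_a(x^{(j)})$. For \eqref{eq:14} I would first use $\sum_{a=1}^{q-1} w_a(\cdot) = n - w_0(\cdot)$ and $\sum_{a=2}^{q-1} w_a(\cdot) = n - w_0(\cdot) - w_1(\cdot)$ to rewrite its right-hand side as $-w_0(x^{(j-2)}) + w_0(x^{(j-1)}) + w_1(x^{(j-1)})$; repeating the computation above (with the subscript $a-1 = 0$ read cyclically modulo $q$) again telescopes down to $w_1(x^{(j)}) = n_1 + m_{2-j}$.

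It remains to confirm that passing from $w_a$ to the centred $W_a = w_a - 2\lfloor n/(2q) \rfloor$ does not break anything. On each side of \eqref{eq:14} the signed count of weight terms is $(q-1) - (q-2) = 1$, and on each side of \eqref{eq:15} it is $-1 + 1 + 1 = 1$, so the constant shift cancels uniformly and the identities hold verbatim for the centred tuples $W(x^{(j)})$.

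There is no real obstacle: as the author notes, the claim holds by inspection once the closed form for $w_a(x^{(j)})$ is in hand. The only subtle points are the modular indexing of the moving counts $m_a$ and the bookkeeping of signs that guarantees cancellation of the normalization constant; both are handled uniformly by the $(n_a, m_a)$ decomposition.
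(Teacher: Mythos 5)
Your proof is correct. The paper offers essentially no argument for this claim — it is introduced with the phrase ``which can be seen to be true by inspection'' — so your write-up fills in a step the paper leaves to the reader. Your closed form $w_a(x^{(j)}) = n_a + m_{(a-(j-1)) \bmod q}$, obtained by splitting coordinates according to whether $d_i = 0$ or $d_i = 1$, is exactly the kind of inspection the paper has in mind, and the substitution into \eqref{eq:15} (and its $a=1$ extension that you show is equivalent to \eqref{eq:14}) telescopes cleanly. Your final observation is also correct and worth having made explicit: both \eqref{eq:14} and \eqref{eq:15} have a signed coefficient sum equal to $1$ on the right-hand side, so the normalization constant $2\lfloor n/2q\rfloor$ contributes $-c$ to each side and cancels, meaning the identity for the raw weights $w_a$ transfers verbatim to the centred tuples $W_a$. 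For comparison, the computation in the paper that most resembles yours appears later, in \eqref{eq:18}, where a telescoping identity of the same shape is verified for the hypergraph edge labels of the abstract removal lemma; your $(n_a, m_a)$ decomposition is the concrete analogue specialized to actual restricted progressions.
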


Finally, we are ready to state the removal property for feasible arrangements.
In this case it seems slightly more convenient (but not much different)
to work in the cyclic group $\mathbb{Z}_N$ rather than in $\mathbb{Z}$.
\begin{theorem}
  \label{thm:feasible-removal}
  For all $\mu > 0$ there exists $\delta := \delta(q, \mu) > 0$ such that
  for all sets $R^{(1)}, \ldots, R^{(q)} \subseteq \mathbb{Z}_N^{q-1}$:
  
  If the product $R^{(1)} \times \ldots \times R^{(q)}$ contains at most
  $\delta N^{2(q-1)}$ feasible arrangements, then it is possible to remove
  a total number of at most $\mu N^{q-1}$ tuples from $R^{(1)}, \ldots, R^{(q)}$
  and obtain sets $R'^{(1)}, \ldots, R'^{(q)}$ such that the product
  $R'^{(1)} \times \ldots \times R'^{(q)}$ contains no feasible tuple
  arrangements.
\end{theorem}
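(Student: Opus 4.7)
My plan is to adapt the triangle-removal argument used in the proof of Theorem~\ref{thm:two-dimensional} to a $(q-1)$-uniform hypergraph-removal argument for general $q$. The goal is to construct a $q$-partite $(q-1)$-uniform hypergraph $H$ whose simplices ($K_q^{(q-1)}$-subgraphs) are in many-to-one correspondence with feasible arrangements in $R^{(1)} \times \cdots \times R^{(q)}$. Once this is done, I apply $(q-1)$-uniform hypergraph removal to delete few hyperedges and destroy all simplices, then translate that deletion into a few-tuple removal.

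For the construction I take $q$ parts $V_1, \ldots, V_q$, each a copy of $\mathbb{Z}_N^{q-1}$, for $n = qN^{q-1}$ vertices in total. The main preliminary step is to fix, for each $j \in [q]$, a $\mathbb{Z}_N$-linear form $L^{(j)} : \prod_{i \ne j} V_i \to \mathbb{Z}_N^{q-1}$ such that for every vertex tuple $(v_1, \ldots, v_q)$ the tuples $w^{(j)} := L^{(j)}(v_i : i \ne j)$ automatically satisfy the feasibility equations \eqref{eq:14}--\eqref{eq:15}, and additionally such that each $L^{(j)}$ is invariant under a common shift $v_i \mapsto v_i + s$. For $q=3$ this is precisely the content of the explicit formulas \eqref{eq:01}--\eqref{eq:03}; for general $q$ one extracts the forms by solving a linear-algebra exercise on the recurrence in Definition~\ref{def:feasible}, using the fact that feasibility cuts out a linear subspace of dimension $2(q-1)$ within the $q(q-1)$-dimensional arrangement space. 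With $L^{(j)}$ fixed, a $(q-1)$-tuple $(v_i)_{i \ne j}$ is declared a hyperedge of $H$ of color $j$ iff $L^{(j)}(v_i : i \ne j) \in R^{(j)}$; shift invariance then ensures that each feasible arrangement in the product comes from at least $N^{q-1}$ vertex tuples, and thus from at least $N^{q-1}$ simplices.

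Having built $H$, the hypothesis of at most $\delta N^{2(q-1)}$ feasible arrangements translates to at most $O_q(\delta) \cdot n^q$ simplices in $H$, so by $(q-1)$-uniform hypergraph removal there is $\epsilon = \epsilon(\delta) \to 0$ such that at most $\epsilon n^{q-1}$ hyperedges suffice to make $H$ simplex-free. I then translate this into tuple removal by a thresholding argument: each label $w \in R^{(j)}$ is carried by $N^{(q-1)(q-2)}$ hyperedges of color $j$, and I place $w$ into $R'^{(j)}$ iff a tuned fraction of these is in the deletion set. The threshold is chosen so that (i) $\sum_j |R'^{(j)}| \le \mu N^{q-1}$, and (ii) no feasible arrangement in $\prod_j (R^{(j)} \setminus R'^{(j)})$ can survive, since such an arrangement would leave too many undeleted hyperedges for all of its $N^{q-1}$ shift-orbit simplices to have been destroyed.

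The main obstacle I anticipate lies in balancing the threshold in step (ii) for $q \ge 4$: a naive fractional threshold gives $|R'^{(j)}|$ growing with $N$, because the $N^{q-1}$ shift-orbit hyperedges of a single arrangement form only a $N^{-(q-1)(q-3)}$ fraction of the total hyperedges labelled by a given $w$. The remedy is to exploit that those $N^{q-1}$ hyperedges form an algebraically structured orbit — so deleting all of them forces a structured, not arbitrary, slice of the deletion set — or to invoke a partite/regular version of hypergraph removal as used in the systems-of-linear-equations works \cite{Sha10, KSV12}, which already outputs a label-compatible deletion. A secondary, essentially mechanical challenge is writing down the linear forms $L^{(j)}$ explicitly for general $q$ from the recurrence of Definition~\ref{def:feasible}.
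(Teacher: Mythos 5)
Your framework is the paper's: a $q$-partite $(q-1)$-uniform hypergraph with parts indexed by $\mathbb{Z}_N^{q-1}$, edges in part $j$ labeled by tuples in $R^{(j)}$ via linear forms, hypergraph removal (Theorem~\ref{thm:hypergraph-removal}), and a thresholding step back to tuple removal. The linear forms you describe are written explicitly in the paper as equation~\eqref{eq:13}, so the ``secondary mechanical challenge'' is routine.

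But the obstacle you flag at the end is not resolved by either of your proposed remedies, and it marks a genuine gap in your argument. The source of the trouble is a wrong simplex count: you assign each feasible arrangement only its $N^{q-1}$ shift orbit, whereas the fiber of the labeling map $(x^{(1)},\dots,x^{(q)}) \mapsto (w^{(1)},\dots,w^{(q)})$ over a fixed feasible arrangement has dimension $q(q-1) - 2(q-1) = (q-1)(q-2)$, since the vertex-tuple space has dimension $q(q-1)$ and feasibility cuts out a $2(q-1)$-dimensional image. Hence each feasible arrangement corresponds to $N^{(q-1)(q-2)}$ simplices, not $N^{q-1}$; this is exactly Claim~\ref{cl:feasible-simplex}, where the $q-2$ middle vertices $x^{(2)},\dots,x^{(q-1)}$ are free and $x^{(1)},x^{(q)}$ are then determined. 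Your shift orbit is a strict $N^{q-1}$-sized subset of the full fiber, and the two counts agree only at $q=3$, which is why the triangle case misled you.

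With the correct fiber count, the thresholding closes without any extra machinery. The paper also verifies that these $N^{(q-1)(q-2)}$ simplices are pairwise edge-disjoint (two simplices sharing all labels must differ in at least two vertices, so their hyperedges differ). Thus if a deletion set $\tilde E$ of at most $\mu' q^{q-1} N^{(q-1)^2}$ hyperedges destroys all of them, pigeonhole gives a $j$ with at least $N^{(q-1)(q-2)}/q$ deleted edges labeled $w^{(j)}$; marking such $w^{(j)}$ as bad yields at most $q|\tilde E|/N^{(q-1)(q-2)} = O(\mu')N^{q-1}$ bad labels, which after tuning $\mu'$ is $\le \mu N^{q-1}$. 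You need neither the structured-orbit heuristic nor a partite or colored removal lemma in the style of \cite{Sha10, KSV12}: the plain removal lemma plus the $(q-1)(q-2)$-dimensional edge-disjoint fiber is exactly what balances the estimate.
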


\subsection{Theorem~\ref{thm:feasible-removal} implies
  Theorem~\ref{thm:product-removal}}

The CLT argument that we use to prove that Theorem~\ref{thm:product-removal}
is implied by Theorem~\ref{thm:feasible-removal} can be encapsulated in the
following lemma that will be proved last. Before stating the lemma
we need one more definition:
\begin{definition}
  Let $w \in \mathbb{Z}^{q-1}$ be a weight tuple and
  $w^{(1)}, \ldots, w^{(q)} \in \mathbb{Z}^{q(q-1)}$ a weight arrangement.
  We let
  \begin{align*}
    \# w
    &:= \left|\left\{
      x \in \mathbb{Z}_q^n: w(x) = w 
      \right\}\right|\\
    \# \left(w^{(1)}, \ldots, w^{(q)}\right)
    &:= \left|\left\{
      \left(x^{(1)}, \ldots x^{(q)}\right) \in \cP:
      \ \forall j = 1, \ldots, q:
      w(x^{(j)}) = w^{(j)}
      \right\}\right|
  \end{align*}
\end{definition}
\begin{lemma}
  \label{lem:clt-counts}
  Let $q \ge 3$, $C_1 > 0$ and let $N := C_1 \sqrt{n}$.
  For any weight tuple $w \in [-N, N]^{q-1}$ and any weight arrangement
  $w^{(1)}, \ldots, w^{(q)} \in [-N, N]^{q(q-1)}$ we have the following:
  \begin{enumerate}
  \item If $w^{(1)}, \ldots, w^{(q)}$ is not feasible, then
    $\#\left(w^{(1)}, \ldots, w^{(q)}\right) = 0$.
  \item If $w^{(1)}, \ldots, w^{(q)}$ is feasible, then
    \begin{align}
      \frac{1}{C}\le
      \#\left(w^{(1)}, \ldots, w^{(q)}\right)\cdot\frac{N^{2(q-1)}}{(2q)^n}
      \le C\;,
      \label{eq:11}\end{align}
    for large enough $n$ and some $C > 0$ that may depend on $C_1$.
  \item Similarly,
    $\frac{1}{C}\le\# w\cdot\frac{N^{q-1}}{q^n}\le C$ for large enough $n$.
  \end{enumerate}
\end{lemma}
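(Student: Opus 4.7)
Part~(1) is immediate from Claim~\ref{cl:progression-feasible}: every restricted progression has a feasible weight arrangement, so no $x$ maps to a non-feasible one, and $\#(w^{(1)},\ldots,w^{(q)}) = 0$.

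For Part~(3), I would view $X \in \mathbb{Z}_q^n$ as $n$ independent coordinates in $\mathbb{Z}_q$; then the shifted weight tuple $W(X) \in \mathbb{Z}^{q-1}$ is precisely the sort of i.i.d.~sum handled by Theorem~\ref{cor:lclt} with $\ell = q-1$. For $w$ with $\|w\|_\infty \le N = C_1\sqrt{n}$ we have $\|d\|_2^2 = O(n)$, so~\eqref{eq:23} gives $\Pr[W(X) = w] = \Theta(n^{-(q-1)/2}) = \Theta(N^{-(q-1)})$. Multiplying by $q^n$ yields the stated two-sided bound on $\# w$.

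Part~(2) is where the work lies. The plan is to introduce refined counts $M(a,d) := \bigl|\{i : x_i^{(1)} = a,\ x_i^{(2)} - x_i^{(1)} = d\}\bigr| - \lfloor n/(2q)\rfloor$ for $(a,d) \in \mathbb{Z}_q \times \{0,1\}$. Omitting one coordinate to absorb the fixed sum constraint, $M$ sits in $\mathbb{Z}^{2q-1}$ as a sum of $n$ i.i.d.~tuples meeting the hypothesis of Theorem~\ref{cor:lclt} with $\ell = 2q-1$. A direct computation gives $W_a(x^{(j)}) = M(a,0) + M(a - (j-1),1)$ (indices mod~$q$), so $W = AM$ for an explicit $\{0,1\}$-valued matrix $A$. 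I would verify that the image of $A$ is exactly the $2(q-1)$-dimensional feasibility subspace of $\mathbb{R}^{q(q-1)}$ and that its kernel is the one-dimensional integer line spanned by $k$ with $k(a,0) = -1$, $k(a,1) = +1$. Given an integer feasible $w$ with $\|w\|_\infty \le N$, solving the triangular system $W_a(x^{(1)}) - W_a(x^{(2)}) = M(a,1) - M(a-1,1)$ recursively (initializing at $M(0,1) = 0$) produces an integer preimage $m_0$ with $\|m_0\|_\infty = O(qN)$; the feasibility relations~\eqref{eq:14}--\eqref{eq:15} then guarantee that the values $W_a(x^{(j)})$ for $j \ge 3$ come out correctly. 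All integer solutions of $Am = w$ then form $\{m_0 + tk : t \in \mathbb{Z}\}$, and $\#(w^{(1)},\ldots,w^{(q)}) = (2q)^n \sum_t \Pr[M = m_0 + tk]$.

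The lower bound in~\eqref{eq:11} is now easy: restricting $t$ to $[-cN, cN]$ for a small constant $c$ keeps $\|m_0 + tk\|_2^2 = O(n)$, so the lower bound in~\eqref{eq:23} gives $\Pr[M = m_0 + tk] = \Omega(n^{-(2q-1)/2})$ for each of $\Theta(N)$ such $t$, summing to $\Pr[W = w] = \Omega(N \cdot n^{-(2q-1)/2}) = \Omega(N^{-2(q-1)})$. The main obstacle is the matching upper bound, because the crude estimate $\Pr[M = m] \le O(n^{-(2q-1)/2})$ of~\eqref{eq:23} is not summable over all $t \in \mathbb{Z}$. For this I would use the full Gaussian form~\eqref{eq:26}: since $\|k\|_2^2 > 0$, we have $\|m_0 + tk\|_2^2 \ge ct^2 - O(N^2)$, so the Gaussian main term contributes $\sum_t n^{-(2q-1)/2}\exp(-ct^2/n + O(1))$, a Riemann sum comparable to $\int n^{-(2q-1)/2}\exp(-ct^2/n)\,dt = \Theta(n^{-(q-1)}) = \Theta(N^{-2(q-1)})$. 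The error term $o\bigl(n^{-(2q-1)/2}\min(1,n/\|d\|_2^2)\bigr)$, split into the regimes $|t| \le \sqrt{n}$ and $|t| > \sqrt{n}$, sums to $o(n^{-(q-1)})$ in both cases. Altogether $\Pr[W = w] = O(N^{-2(q-1)})$, and multiplication by $(2q)^n$ gives the upper bound in~\eqref{eq:11}.
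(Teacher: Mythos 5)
Your proof is correct and follows essentially the same route as the paper. Your refined counts $M(a,d)$ are exactly the paper's $M_{\same}(a)$ and $M_{\cycle}(a)$, your triangular parametrization of the integer solution line $\{m_0+tk\}$ matches the paper's recursion~\eqref{eq:08}--\eqref{eq:10} (with $t$ playing the role of the paper's free parameter $k=m_{\cycle}(0)$), and the lower/upper bound computations --- restricting $t$ to a range of size $\Theta(N)$ for the lower bound, and using the full Gaussian form~\eqref{eq:26} together with the $o(\min(1,n/\|d\|_2^2))$ error term split at $|t|\sim\sqrt{n}$ for the upper bound --- are the same as in the paper.
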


\begin{proof}[Proof of Theorem~\ref{thm:product-removal} assuming
  Theorem~\ref{thm:feasible-removal} and Lemma~\ref{lem:clt-counts}]
  Let $S^{(1)}, \ldots, S^{(q)} \subseteq \mathbb{Z}_q^n$ be the sets
  from the statement. Since they are symmetric, there are sets
  $R^{(1)}, \ldots, R^{(q)} \subseteq \mathbb{Z}^{q-1}$ such hat
  \begin{align*}
    x \in S^{(j)} \iff W(x) \in R^{(j)} \; .
  \end{align*}
  
  The first observation is that we can assume without loss of generality
  that $n$ is large and that the weights are restricted such that
  $R^{(j)} \in [-N, N]^{q-1}$ for $N := C_1\sqrt{n}$
  for some $C_1 := C_1(q, \mu) > 0$.  
  This is because by a standard concentration bound
  \begin{align*}
    \left| \left\{ x: W(x) \notin [-N, N]^{q-1} \right\}
    \right| \le \frac{\mu}{2q}q^n
  \end{align*}
  for a big enough $C_1$ and therefore it takes at most $\mu/2 \cdot q^n$
  removals to get rid of all the elements giving rise to weight
  tuples outside $[-N, N]^{q-1}$.

  By Lemma~\ref{lem:clt-counts}.2, there exists some $C:=C(q,C_1) > 0$
  such that 
  each feasible arrangement in
  $R^{(1)} \times \ldots \times R^{(q)}$ induces at least
  $\frac{(2q)^n}{C\cdot N^{2(q-1)}}$ restricted progressions in
  $S^{(1)} \times \ldots \times S^{(q)}$.
  Let $\mu' := \frac{\mu}{2C(2q)^{q-1}}$ and let $\delta'(\mu') > 0$ be given by
  Theorem~\ref{thm:feasible-removal}. We set
  $\delta(\mu) := (2q)^{2(q-1)}\delta'/C$.

  Since, by assumption,
  $S^{(1)} \times \ldots \times S^{(q)}$ contains at most $\delta(2q)^n$ restricted
  progressions,
  $R^{(1)} \times \ldots \times R^{(q)}$ contains
  at most $\delta CN^{2(q-1)}$
  feasible arrangements (understood as elements of $\mathbb{Z}^{q(q-1)}$).
  Furthermore, taking $N' := 2qN$ and inspecting Definition~\ref{def:feasible},
  we conclude that $R^{(1)} \times \ldots \times R^{(q)}$ contains at most
  $\frac{\delta C}{(2q)^{2(q-1)}} N'^{2(q-1)} = \delta' N'^{2(q-1)}$
  feasible arrangements understood as elements of
  $\mathbb{Z}_{N'}^{q(q-1)}$.

  Applying Theorem~\ref{thm:feasible-removal} for $N'$ and
  $\mu'$, we get that one can remove at most
  $\mu' N'^{q-1} = \frac{\mu}{2C} N^{q-1}$ elements
  from $R^{(1)}, \ldots, R^{(q)}$ and obtain
  $R'^{(1)}, \ldots, R'^{(q)} \subseteq [-N, N]^{q-1}$ such that
  $R'^{(1)} \times \ldots \times R'^{(q)}$ contains no feasible arrangements
  (understood either as elements of $\mathbb{Z}_{N'}^{q(q-1)}$ or
  $\mathbb{Z}^{q(q-1)}$).
  Finally, due to Lemma~\ref{lem:clt-counts}.3, we can remove
  at most $\frac{\mu}{2}q^n$ elements from the sets $S^{(1)}, \ldots, S^{(q)}$
  to obtain symmetric sets $T^{(1)}, \ldots, T^{(q)}$ such that, by
  Lemma~\ref{lem:clt-counts}.1, the product
  $T^{(1)} \times \ldots \times T^{(q)}$ contains no restricted progressions.
\end{proof}

It remains to prove Lemma~\ref{lem:clt-counts}. We achieve this by utilizing
Theorem~\ref{cor:lclt}.

\begin{proof}[Proof of Lemma~\ref{lem:clt-counts}]
  Point 1 is just a restatement of Claim~\ref{cl:progression-feasible}.

  We turn to Point 3 next. Consider $X \in \mathbb{Z}_q^n$ sampled uniformly at
  random. Recall our notation
  $W(x) = (W_1(x),\ldots,W_{q-1}(x)) = (w_1(x)-2\lfloor n/2q\rfloor,\ldots,
  w_{q-1}(x)-2\lfloor n/2q\rfloor)$ for $x\in\mathbb{Z}_q^n$ and
  the random variable $W = W(X)$. Clearly, we can apply~\eqref{eq:23}
  to $W$ and obtain
  \begin{align*}
    \frac{1}{Cn^{(q-1)/2}}\le
    \Pr[W = w] = \frac{\# w}{q^n}
    \le \frac{C}{n^{(q-1)/2}}\;,
  \end{align*}
  which yields the conclusion after rearranging the terms.

  As for Point~$2$, consider a choice of uniform random
  restricted progression $X^{(1)}, \ldots, X^{(q)}$. 
  We will apply Theorem~\ref{cor:lclt} to random variables
  \begin{align*}
    M_{\same}(a)
    &:= \left|\left\{ i\in[n]: X_i^{(1)}, \ldots, X_i^{(q)}
      = a, a, \ldots, a
      \right\}\right|-\left\lfloor\frac{n}{2q}\right\rfloor\;,\\
    M_{\cycle}(a)
    &:= \left|\left\{ i\in[n]: X_i^{(1)}, \ldots, X_i^{(q)}
      = a, a+1, \ldots, a-1
      \right\}\right|-\left\lfloor\frac{n}{2q}\right\rfloor\;.
  \end{align*}
  We let $M := \left(M_{\same}(1), \ldots, M_{\same}(q-1), M_{\cycle}(0),
    \ldots, M_{\cycle}(q-1)\right)$ (note that $M \in \mathbb{Z}^{2q-1}$).
  Now we need to specify a relation between possible values of $M$ and
  feasible weight arrangements $w^{(1)}, \ldots, w^{(q)}$.
  Observe that each possible value $m$ of $M$ uniquely determines a
  feasible weight arrangement $w^{(1)}, \ldots, w^{(q)}$.
  It turns out that there is a reasonably simple characterization of
  the set of tuples $m$
  that give rise to a given arrangement $w^{(1)},\ldots,w^{(q)}$.
  Namely, we check that these values form a linear one-dimensional solution space
  with triangular structure given by
  \begin{align}
    m_{\cycle}(0)
    &=
      k\;,
    \label{eq:08}\\
    m_{\same}(a)
    &=
      w_a^{(2)}-m_{\cycle}(a-1)\;,\quad a=1,\ldots,q-1\;,
    \label{eq:09}\\
    m_{\cycle}(a)
    &=
      w_a^{(1)}-m_{\same}(a)\;,\quad\quad\ \ a=1,\ldots,q-1\;.
      \label{eq:10}
  \end{align}
  for every $k \in \mathbb{Z}$. Let us denote each solution given by
  \eqref{eq:08}-\eqref{eq:10} by $m(w^{(1)}, \ldots, w^{(q)}; k)$.
  Therefore, we have
  \begin{align}\label{eq:12}
    \#\left(w^{(1)},\ldots,w^{(q)}\right)=
    (2q)^n\sum_{k\in\mathbb{Z}}
    \Pr\left[M=m\left(w^{(1)},\ldots,w^{(q)};k\right)\right] \; .
  \end{align}
  To establish \eqref{eq:11} we will separately bound this sum from below
  and from above. For the lower bound, first observe that as long as
  $|k|\le N$, then also
  $|w^{(j)}_a| \le N$, we can bound absolute values of all elements
  of the tuple $m\left(w^{(1)},\ldots,w^{(q)};k\right)$ with
  \begin{align*}
    \left|m_{\cycle}(a)\right|, \left|m_{\same}(a)\right|\le
    2qN
  \end{align*}
  and consequently obtain bounds on the $2$-norm and
  use~\eqref{eq:23} to bound the
  probability in~\eqref{eq:12}:
  \begin{align*}
    \left\|m\left(w^{(1)},\ldots,w^{(q)};k\right)\right\|_2^2
    &\le 8q^3N^2\;,\\
    \Pr\left[M=m\left(w^{(1)},\ldots,w^{(q)};k\right)\right]
    &\ge
    \frac{1}{Cn^{(2q-1)/2}}\;.
  \end{align*}
  Consequently,
  \begin{align*}
    \#\left(w^{(1)},\ldots,w^{(q)}\right)
    &\ge
    (2q)^n\sum_{k \in [-N, N]}
      \Pr\left[M=m\left(w^{(1)},\ldots,w^{(q)};k\right)\right]\\
    &\ge \frac{(2q)^nN}{Cn^{(2q-1)/2}}
      \ge \frac{(2q)^n}{CN^{2(q-1)}} \;,
  \end{align*}
  and rearranging gives the lower bound in~\eqref{eq:11}.
  For the upper bound, first note that clearly
  \begin{align*}
    \left\|m\left(w^{(1)},\ldots,w^{(q)};k\right)\right\|_2^2\ge
    m_{\cycle}(0)^2=k^2\;.
  \end{align*}
  This time we need to use the more precise error bound
  from~\eqref{eq:26}. Continuing the computation,
  \begin{align*}
    &\frac{\#\left(w^{(1)},\ldots,w^{(q)}\right)}{(2q)^n}
    =
    \sum_{k \in \mathbb{Z}}
    \Pr\left[M=m\left(w^{(1)},\ldots,w^{(q)};k\right)\right]\\
    &\qquad\le\frac{1}{n^{(2q-1)/2}}\cdot \left(\sum_{k\in\mathbb{Z}}
    O\left(\exp\left(-\frac{k^2}{Cn}\right)\right)+
    o\left(\min\left(1,\frac{n}{k^2}\right)\right)
    \right)
    \\
    &\qquad\le\frac{1}{n^{(2q-1)/2}}\cdot\left(
    \sum_{D=0}^{\infty}\sum_{D\sqrt{n}\le k<(D+1)\sqrt{n}}
      O\left(\exp\left(-D^2/C\right)\right)
      +o\left(1/D^2\right)
      \right)
    \\
    &\qquad\le\frac{1}{n^{q-1}}\cdot\left(
      \sum_{D=0}^{\infty} O\left(\exp\left(-D^2/C\right)\right)
      +o\left(1/D^2\right)\right)
    \le \frac{C}{N^{2(q-1)}}\;,
  \end{align*}
  and another rearrangement of terms finishes the proof.
\end{proof}

\begin{remark}
  Strictly speaking, we did not need slightly more complicated upper
  bound in~\eqref{eq:11} to establish that Theorem~\ref{thm:feasible-removal}
  implies Theorem~\ref{thm:product-removal}. However, this upper bound
  allows us to reverse the reasoning and obtain also that
  Theorem~\ref{thm:product-removal} implies Theorem~\ref{thm:feasible-removal}.
  We omit the details, but the proof is a straightforward reversal of the
  ``forward'' argument.
\end{remark}

\subsection{Proof of Theorem~\ref{thm:feasible-removal}}

To prove Theorem~\ref{thm:feasible-removal} we will need the hypergraph
removal lemma originally used in the proof of Szemerédi's theorem.
To state the removal lemma we first define hypergraphs and simplices.

\begin{definition}
  A \emph{$k$-uniform hypergraph} is a pair $H = (V, E)$, where $E$
  is a set of subsets of size $k$ (\emph{edges}) of a finite set
  of \emph{vertices} $V$. A \emph{$k$-simplex} is the unique $k$-uniform
  hypergraph with $k+1$ vertices and $k+1$ edges.
\end{definition}

\begin{theorem}[\cite{RS04, NRS06, Gow07}]
  \label{thm:hypergraph-removal}
  For every $k\ge 2$ and every $\eps > 0$ there exists
  $\delta := \delta(k,\eps) > 0$ such that for all $k$-uniform hypergraphs
  $H$ with $N$ vertices: If $H$ contains at most $\delta N^{k+1}$ simplices,
  then it is possible to remove at most $\eps N^{k}$ edges from $H$
  and obtain a hypergraph that does not contain any simplices.
\end{theorem}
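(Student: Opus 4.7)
The plan is to prove Theorem~\ref{thm:hypergraph-removal} via a hypergraph regularity lemma paired with a matching counting lemma, mirroring how the graph case ($k=2$) follows from Szemerédi's regularity lemma and its triangle counting companion. First I would establish a \emph{hypergraph regularity lemma}: every $k$-uniform hypergraph $H$ on $N$ vertices admits a hierarchical decomposition consisting of partitions of the $j$-uniform cliques on $V(H)$ for each $j = 2, \ldots, k-1$, together with a partition of the $k$-subsets, such that for almost all $(k+1)$-tuples of cells, the density of $H$ inside is essentially stable under random refinements of the lower-level cells. The key subtlety is that regularity at uniformity $k$ must be defined \emph{relative} to the chosen partitions at all lower uniformities; the naive analogue of Szemerédi regularity is known to be too weak to control simplex counts.

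Next I would prove the corresponding \emph{counting lemma}: if a collection of cells at all levels $2, \ldots, k$ is hierarchically regular and the $k$-densities are bounded below, then the number of simplices it spans equals, up to lower-order error, the product of those densities times the number of $(k+1)$-vertex configurations admitted by the lower-level structure. In particular, a dense, regular, simplex-admitting configuration contains at least a positive constant density of simplices.

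With both tools in hand, the removal argument is routine. Given $\eps > 0$, pick regularity parameters much smaller than $\eps$ and apply the regularity lemma. Delete every edge lying in (i) an irregular configuration, (ii) a configuration with some lower-level cell of density below a threshold of order $\eps$, or (iii) a configuration whose density profile forbids any simplex of $H$ from sitting inside it. A standard calculation shows this costs at most $\eps N^k$ edges. Any surviving simplex must then lie entirely in a dense, regular, simplex-admitting configuration, and the counting lemma forces $H$ to have contained at least $c(k,\eps) \cdot N^{k+1}$ simplices; choosing $\delta(k,\eps) < c(k,\eps)$ gives the contradiction.

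The main obstacle is the hypergraph regularity lemma itself, which is substantially more intricate than its graph ancestor. One must iteratively refine partitions at each uniformity $2, \ldots, k-1$, ensuring each new level is regular with respect to all coarser levels, and then prove termination via an energy-increment argument applied to a mean-squared-density functional defined hierarchically across the levels. Formulating the right notion of regularity at each level — strong enough that the counting lemma bootstraps cleanly from uniformity $j$ to $j+1$, yet weak enough to be achievable by a bounded number of refinement steps — is the delicate technical heart of the proof, and is essentially the point at which the independent approaches of Rödl--Skokan--Nagle--Schacht and Gowers diverge in execution while reaching the same end.
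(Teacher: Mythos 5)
The paper does not prove this theorem at all: it is imported as a black box from the cited works of Rödl--Skokan, Nagle--Rödl--Schacht, and Gowers, so there is no in-paper argument to compare yours against. Your outline does correctly describe the architecture of the known proofs --- a hierarchical hypergraph regularity lemma in which regularity at uniformity $j$ is measured relative to the partition structure at all lower uniformities, a matching counting lemma, and the standard cleaning/deletion argument --- and you correctly flag the central subtlety, namely that the naive level-$k$ analogue of Szemerédi regularity is too weak to support a counting lemma. However, as written this is a roadmap rather than a proof: the two lemmas you invoke (the hierarchical regularity lemma and the associated counting lemma) are themselves the entire technical content of the cited papers, each requiring a delicate choice of regularity notion and a multi-level energy-increment argument, and your proposal describes what they should say without establishing them. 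For the purposes of this paper that is fine --- the result is legitimately used as a citation --- but the proposal should not be mistaken for a self-contained derivation.
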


Note that a $2$-uniform hypergraph is a simple graph, a $2$-simplex is a
triangle and Theorem~\ref{thm:hypergraph-removal} restricted to $k=2$ is
the triangle removal lemma.
With Theorem~\ref{thm:hypergraph-removal} we are ready to prove the removal
property  for feasible arrangements.

Let $R^{(1)}, \ldots, R^{(q)} \subseteq \mathbb{Z}_N^{q-1}$.
We define a $(q-1)$-uniform hypergraph
$H = (X, E)$. The set of vertices of $H$ consists of $q$ disjoint parts
$X = X^{(1)}\cup\ldots\cup X^{(q)}$ with each of the parts indexed by
$\mathbb{Z}_N^{q-1}$. Therefore, $H$ has $q N^{q-1}$ vertices.

The edge set also consists of $q$ disjoint parts
$E = E^{(1)} \cup \ldots \cup E^{(q)}$ such that
\begin{align*}
  E^{(j)}\subseteq X^{(1)}\times\ldots\times X^{(j-1)}\times X^{(j+1)}\times
  \ldots\times X^{(q)}\;.
\end{align*}
Therefore, every simplex in $H$ must contain one vertex from each $X^{(j)}$ and
one edge from each $E^{(j)}$.

Recall that a vertex $x^{(j)} \in X^{(j)}$ is of the form
$x^{(j)}=\left(x_1^{(j)}, \ldots, x_{q-1}^{(j)}\right)\in\mathbb{Z}_N^{q-1}$.
To define the edges of $H$ it will be useful to let
$x_0^{(j)} := -\sum_{a=1}^{q-1} x_a^{(j)}$. With that in mind, we say
that
\begin{align*}
  \left(x^{(1)}, \ldots, x^{(j-1)}, x^{(j+1)},\ldots,x^{(q)}\right)\in E^{(j)}
  \iff
  \left(w_1^{(j)}, \ldots, w_{q-1}^{(j)}\right)\in R^{(j)}\;,
\end{align*}
where
\begin{align}
  \label{eq:13}
  w_a^{(j)} := 
  \sum_{t=1}^{q-1} \sum_{b=a-t+1}^a x_b^{(j-t)}\;.
\end{align}
In the expression above the indices $b$ and $j-t$ are understood to
``wrap around'' modulo $q$.
To clarify by example (which might be useful to keep in mind throughout
the proof), for $q=4$, $j=2$ and $a=1$ we get
\begin{align*}
  w_1^{(2)} = x_1^{(1)} + x_0^{(4)}+x_1^{(4)} + x_3^{(3)}+x_0^{(3)}+x_1^{(3)}\;.
\end{align*}
We use the tuple arrangement
$\left(w_1^{(j)},\ldots,w_{q-1}^{(j)}\right)\in R^{(j)}$
defined in \eqref{eq:13} as a label of the edge
$\big(x^{(1)},\ldots,\allowbreak x^{(j-1)},\allowbreak x^{(j+1)},
  \allowbreak \ldots,x^{(q)}\big)\in E^{(j)}$.

We now proceed to checking that simplices in $H$ and feasible arrangements
in $R^{(1)}\times\ldots\times R^{(q)}$ correspond to each other.
To that end we start with some preparation. First, observing that
by definition $\sum_{a=0}^{q-1} x_a^{(j)} = 0$, we can rewrite \eqref{eq:13}
as
\begin{align}
  w_a^{(j)} = \sum_{t=0}^{q-1}\sum_{b=a-t+1}^a x_b^{(j-t)}\;,
  \qquad a=1,\ldots,q-1,\ j=1,\ldots,q\;.
  \label{eq:16}
\end{align}
Let $w_0^{(j)} := -\sum_{a=1}^{q-1} w_a^{(j)}$ and
check that \eqref{eq:16} naturally extends to
\begin{align*}
  w_0^{(j)} = \sum_{t=0}^{q-1}\sum_{b=-t+1}^0x_b^{(j-t)}\;.
\end{align*}

We now make two claims going from simplices to feasible arrangements and
vice versa.
\begin{claim}
  \label{cl:simplex-feasible}
If some $q$ vertices
$x^{(1)}, \ldots, x^{(q)}$ of the hypergraph $H$ form a simplex, then
the corresponding arrangement formed by edge labels
$\left(w^{(1)}, \ldots, w^{(q)}\right) \in
R^{(1)}\times\ldots\times R^{(q)}$ is feasible.
\end{claim}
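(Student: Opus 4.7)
The only real content of the claim is an algebraic identity. If $x^{(1)},\ldots,x^{(q)}$ form a simplex in $H$, then for every $j$ the $(q-1)$-tuple obtained by removing $x^{(j)}$ is an edge of $E^{(j)}$, so by the very construction of the hypergraph the label $(w_1^{(j)},\ldots,w_{q-1}^{(j)})$ defined by \eqref{eq:13} already lies in $R^{(j)}$. Hence the only thing to verify is that the $q$ labels $w^{(j)}$ computed from the same vertices $x^{(1)},\ldots,x^{(q)}$ via \eqref{eq:16} automatically satisfy the feasibility relations \eqref{eq:14} and \eqref{eq:15}, understood as identities in $\mathbb{Z}_N$ in the variables $\{x_a^{(j')}\}$.

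My plan is to introduce the discrete difference
\[
  D_a^{(j)} := w_a^{(j)} - w_a^{(j-1)}
\]
and rewrite \eqref{eq:15} in the equivalent form $D_a^{(j)} = D_{a-1}^{(j-1)}$. I would then compute $D_a^{(j)}$ directly from \eqref{eq:16}: reindexing the sum for $w_a^{(j-1)}$ via $s=t+1$ makes almost every term telescope, leaving a residual sum of $x_b^{(j)}$ over a block of $q-1$ consecutive values of $b$, which collapses to a single $x_{a+1}^{(j)}$ by the identity $\sum_{b=0}^{q-1}x_b^{(j)}=0$. Combined with the telescoped remainder this yields the clean formula
\[
  D_a^{(j)} \;=\; \sum_{t=0}^{q-1} x_{a-t+1}^{(j-t)},
\]
with all indices cyclic modulo $q$. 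A second reindexing $t\mapsto t+1$ in the analogous expression for $D_{a-1}^{(j-1)}$ then immediately gives $D_a^{(j)} = D_{a-1}^{(j-1)}$, establishing \eqref{eq:15} for $a=2,\ldots,q-1$.

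For \eqref{eq:14}, I would use the identity $w_0^{(j')} = -\sum_{a=1}^{q-1} w_a^{(j')}$ (which is consistent with the extension of \eqref{eq:16} to $a=0$ recorded in the text just before the claim) to rewrite its right-hand side as $-w_0^{(j-2)} + w_0^{(j-1)} + w_1^{(j-1)}$. That recasts \eqref{eq:14} as the $a=1$ instance of $D_a^{(j)} = D_{a-1}^{(j-1)}$, and the very same telescoping argument, applied to the extended formula for $w_0$, covers it.

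The main obstacle I anticipate is pure bookkeeping: keeping track of the cyclic wrap-arounds in both the $b$-index and the $j-t$ superscript (both modulo $q$), and correctly matching the length-$t$ interval $[a-t+1,a]$ with its image under the $s=t+1$ shift, so that the boundary contributions at $t=0$ and $s=q$ recombine correctly. Once this is handled, both identities drop out mechanically from the two reindexings above.
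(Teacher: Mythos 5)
Your argument is correct and follows essentially the same route as the paper: both reduce \eqref{eq:14} and \eqref{eq:15} to the unified identity in \eqref{eq:17} (via the convention $w_0^{(j')} = -\sum_{a=1}^{q-1} w_a^{(j')}$) and verify it by direct computation from \eqref{eq:16} with cyclic reindexing in $t$. Your intermediate quantity $D_a^{(j)} = \sum_{t=0}^{q-1} x_{a-t+1}^{(j-t)}$ merely repackages the paper's telescoping step in \eqref{eq:18} into a form where the cyclic shift symmetry in $(a,j)$ is visible at a glance, which is a pleasant reorganization but not a different proof.
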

\begin{proof}
The feasibility requirement from \eqref{eq:14} and
\eqref{eq:15} can be rewritten using $w_0^{(j)}$ as
\begin{align}
  \forall j=1,\ldots,q-2: \forall a=1,\ldots,q-1:
  w_a^{(j+2)}=-w_{a-1}^{(j)}+w_{a-1}^{(j+1)}+w_a^{(j+1)}\;.
  \label{eq:17}
\end{align}
To verify \eqref{eq:17} we compute (taking special care for $t\in\{0,q-1\}$
and still keeping in mind
$\sum_{a=0}^{q-1} x_a^{(j)}=0$)
\begin{align}
  -w_{a-1}^{(j)}+w_{a-1}^{(j+1)}+w_a^{(j+1)}
  &=\sum_{t=0}^{q-1}\left(
  -\sum_{b=a-t}^{a-1} x_b^{(j-t)}+\sum_{b=a-t-1}^{a-1}x_b^{(j-t)}
  +\sum_{b=a-t}^ax_b^{(j-t)}
    \right)
  \nonumber\\
  &=\sum_{t=0}^{q-1}\sum_{b=a-t-1}^a x_b^{(j-t)} =
    \sum_{t=0}^{q-1}\sum_{b=a-t+1}^a x_b^{(j+2-t)} = w_a^{(j+2)}\;.
    \label{eq:18}
\end{align}
\end{proof}

\begin{claim}
  \label{cl:feasible-simplex}
  For every feasible arrangement
  $w^{(1)}, \ldots, w^{(q)}\in R^{(1)}\times\cdots\times R^{(q)}$ there
  exist exactly $N^{(q-1)(q-2)}$ simplices in $H$ labeled with
  $w^{(1)},\ldots,w^{(q)}$. Furthermore, these simplices are edge disjoint.
\end{claim}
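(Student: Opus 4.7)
The plan is to parametrize the $N^{(q-1)(q-2)}$ simplices labeled by a fixed feasible arrangement $(w^{(1)},\ldots,w^{(q)})$ via a free choice of $q-2$ vertices, specifically $x^{(3)},\ldots,x^{(q)}\in\mathbb{Z}_N^{q-1}$. The key structural observation underlying everything is that in \eqref{eq:13}, the unique $t\in\{1,\ldots,q-1\}$ with $j-t\equiv j-1\pmod{q}$ is $t=1$, and its inner sum collapses to the single term $x_a^{(j-1)}$. Hence for each $j$ and each $a\in\{1,\ldots,q-1\}$ we have the identity
\begin{align*}
  w_a^{(j)} = x_a^{(j-1)} + \Phi_a^{(j)}\bigl(x^{(j')} : j'\neq j-1,j\bigr)\;,
\end{align*}
where $\Phi_a^{(j)}$ collects the $t\ge 2$ contributions to \eqref{eq:13} and in particular does not involve $x^{(j-1)}$ or $x^{(j)}$.

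Given any choice of $x^{(3)},\ldots,x^{(q)}$, applying this identity with $j=2$ solves uniquely for $x^{(1)}$ (since $\Phi_a^{(2)}$ depends only on the already chosen $x^{(3)},\ldots,x^{(q)}$), and then applying it with $j=3$ solves uniquely for $x^{(2)}$ (since $\Phi_a^{(3)}$ depends on $x^{(1)},x^{(4)},\ldots,x^{(q)}$, all now known). Each free choice thus yields a unique candidate simplex; it then remains to verify that the induced labels $\widetilde w^{(j)}$ computed from this vertex tuple agree with $w^{(j)}$ for $j\notin\{2,3\}$. By Claim~\ref{cl:simplex-feasible}, $\widetilde w$ is feasible, and $\widetilde w^{(2)}=w^{(2)}$, $\widetilde w^{(3)}=w^{(3)}$ by construction. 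Forward propagation of \eqref{eq:14}--\eqref{eq:15} determines $w^{(j)}$ for $j>3$ from $(w^{(2)},w^{(3)})$, and the same relations rearrange to propagate backwards: \eqref{eq:15} for $a=2,\ldots,q-1$ yields $w_{a-1}^{(j-2)}$ from $(w^{(j-1)},w^{(j)})$, and \eqref{eq:14} recovers the remaining coordinate $w_{q-1}^{(j-2)}$. Thus every feasible arrangement is uniquely determined by any two consecutive components, so $\widetilde w = w$.

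For edge-disjointness, suppose two of the $N^{(q-1)(q-2)}$ simplices share a hyperedge $e\in E^{(j)}$. They then agree on all vertices save possibly $x^{(j)}$. Both simplices label the hyperedge $E^{(j+1)}$ by the same $w^{(j+1)}$ (coming from the same feasible arrangement), and in the identity $w^{(j+1)}_a = x_a^{(j)} + \Phi_a^{(j+1)}$ the remainder $\Phi_a^{(j+1)}$ depends only on vertices in $X^{(j')}$ with $j'\neq j,j+1$, which are shared. This forces $x^{(j)}$ to agree between the two simplices, so they coincide.

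The main technical delicacy is the bidirectional propagation of feasibility used to identify $\widetilde w$ with $w$; everything else is a direct read-off of the coefficient structure of \eqref{eq:13}, and the count $N^{(q-1)(q-2)}$ comes for free from the number of independent choices of $x^{(3)},\ldots,x^{(q)}$.
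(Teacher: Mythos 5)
Your proof is correct and follows essentially the same route as the paper: parametrize simplices by $q-2$ free vertices (you choose $x^{(3)},\ldots,x^{(q)}$, the paper $x^{(2)},\ldots,x^{(q-1)}$), recover the remaining two via the $t=1$ unit coefficient of $x^{(j-1)}$ in \eqref{eq:13}, confirm the full label via the feasibility relations, and prove edge-disjointness by the same single-coordinate observation. Your framing — that the label $\widetilde w$ produced by the constructed vertices is automatically feasible by Claim~\ref{cl:simplex-feasible}, and that a feasible arrangement is determined by any two consecutive components via forward/backward propagation of \eqref{eq:14}--\eqref{eq:15} — is a clean way of packaging the paper's ``induction using \eqref{eq:17} and a rearrangement of \eqref{eq:18}'', and your choice of free vertices makes the triangular order of recovering $x^{(1)}$ then $x^{(2)}$ manifestly consistent.
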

\begin{proof}
  We will show that every $x^{(2)}, \ldots, x^{(q-1)} \in \mathbb{Z}_N^{(q-1)(q-2)}$
  can be extended to a simplex $x^{(1)},\ldots,x^{(q)}$ labeled with the
  feasible arrangement $w^{(1)},\ldots,w^{(q)}\in
  R^{(1)}\times\ldots\times R^{(q)}$. First, by inspection we see
  that for fixed $x^{(2)},\ldots,x^{(q-1)},w^{(1)},\ldots,w^{(q)}$ the value of
  $x^{(1)}$ can be determined from the formula for $w^{(2)}$ given by
  \eqref{eq:13}. Similarly, the value of $x^{(2)}$ can be determined from
  \eqref{eq:13} for $w^{(1)}$.

  We still need to check that the vertices $x^{(1)},\ldots,x^{(q)}$ obtained
  in this way satisfy \eqref{eq:13} for $j=3,\ldots,q$. But this follows
  by induction, using \eqref{eq:17} (recall that arrangement
  $w^{(1)},\ldots,w^{(q)}$ is feasible) and a rearrangement of the computation
  in \eqref{eq:18}.

  To argue that the simplices are edge disjoint,
  we first observe that two different simplices $x^{(1)}, \ldots, x^{(q)}$
  and $y^{(1)}, \ldots, y^{(q)}$ with the same label $w^{(1)},\ldots,w^{(q)}$
  have to differ on at least two vertices. This is because for
  two simplices $x^{(1)},\ldots,\allowbreak x^{(j-1)},x,\allowbreak x^{(j+1)},\ldots,x^{(q)}$
  and $x^{(1)},\ldots,x^{(j-1)},y,x^{(j+1)},\ldots,x^{(q)}$ with $x_a \ne y_a$,
  the formula \eqref{eq:13} implies that
  \begin{align*}
    w_a^{(j+1)}\left(x^{(1)},\ldots,x,\ldots,x^{(q)}\right)
    \ne w_a^{(j+1)}\left(x^{(1)},\ldots,y,\ldots,x^{(q)}\right)\;.
  \end{align*}
  Therefore, any two ($q-1$)-hyperedges of simplices $x^{(1)}, \ldots, x^{(q)}$
  and $y^{(1)}, \ldots, y^{(q)}$ with the same label must differ on at least one
  vertex.
\end{proof}

With the two claims the proof is almost finished: For $\mu > 0$
we let $\mu':= \mu/q^{q+1}$ and take $\delta := \delta(q-1, \mu')$
from the hypergraph removal lemma.
Let $R^{(1)},\ldots,R^{(q)}\subseteq \mathbb{Z}^{q-1}_N$ be such that
the product $W^{(1)}\times\ldots\times W^{(q)}$ contains at most $\delta N^{2(q-1)}$
feasible arrangements. By Claims~\ref{cl:simplex-feasible}
and~\ref{cl:feasible-simplex}, the hypergraph $H$ contains at most
$\delta N^{q(q-1)} = \frac{\delta}{q^q}\cdot|X|^q \le \delta|X|^q$ simplices.
By the hypergraph removal lemma, it is possible to remove at most
$\mu' |X|^{q-1} = \mu' q^{q-1} N^{(q-1)^2}$ edges from $H$ to make
it simplex-free. Let $\tilde{E}$ be the set of removed edges.

We define
\begin{align*}
  Z^{(j)} := \left\{w^{(j)}\in R^{(j)}:
  \text{at least $N^{(q-1)(q-2)}/q$
  edges in $\tilde{E}$ are labeled with $w^{(j)}$}
  \right\}
\end{align*}
and let $V^{(j)} := R^{(j)}\setminus Z^{(j)}$.
Observe that $\left|Z^{(j)}\right| \le \mu'q^qN^{q-1} = \mu/q\cdot N^{q-1}$,
and therefore indeed the set of removed arrangements has total density
at most $\mu$.

We argue that
the product $V^{(1)}\times\cdots\times V^{(q)}$ does not contain a feasible
arrangement. Indeed, let $w^{(1)},\ldots,w^{(q)}$ be a feasible arrangement
in $R^{(1)}\times\cdots\times R^{(q)}$.
By Claim~\ref{cl:feasible-simplex},
the hypergraph $H$ contains $N^{(q-1)(q-2)}$ edge disjoint simplices labeled
with $w^{(1)},\ldots,w^{(q)}$.
Since those simplices disappear from $H$ after removing $\tilde{E}$,
each of them must intersect $\tilde{E}$ on at least one edge. By averaging,
there must exist $j$ such that $\tilde{E}$ contains at least
$N^{(q-1)(q-2)}/q$ edges labeled with $w^{(j)}$. But that implies that
$w^{(j)}$ was removed from $R^{(j)}$ and the arrangement
$w^{(1)},\ldots,w^{(q)}$ does not occur in $V^{(1)}\times\cdots\times V^{(q)}$.
\qed

\bibliographystyle{alpha}
\bibliography{bibliography}

\newcommand{\etalchar}[1]{$^{#1}$}
\begin{thebibliography}{BCC{\etalchar{+}}17}

\bibitem[AM13]{AM13}
Per Austrin and Elchanan Mossel.
\newblock Noise correlation bounds for uniform low degree functions.
\newblock {\em Arkiv för Matematik}, 51(1):29--52, 2013.

\bibitem[BCC{\etalchar{+}}17]{BCCGN}
Jonah Blasiak, Thomas Church, Henry Cohn, Joshua~A. Grochow, Eric Naslund,
  William~F. Sawin, and Chris Umans.
\newblock On cap sets and the group-theoretic approach to matrix
  multiplication.
\newblock {\em Discrete Analysis}, 3, 2017.

\bibitem[BR10]{BR10}
Rabi~N. Bhattacharya and R.~Ranga Rao.
\newblock {\em Normal Approximation and Asymptotic Expansions}.
\newblock Society for Industrial and Applied Mathematics, 2010.

\bibitem[BX15]{BX15}
Arnab Bhattacharyya and Ning Xie.
\newblock Lower bounds for testing triangle-freeness in {B}oolean functions.
\newblock {\em Computational Complexity}, 24(1):65--101, 2015.

\bibitem[CLP17]{CLP17}
Ernie Croot, Vsevolod~F. Lev, and P\'{e}ter~P\'{a}l Pach.
\newblock Progression-free sets in {${\mathbb{Z}}_4^n$} are exponentially
  small.
\newblock {\em Annals of Mathematics}, 185:331--337, 2017.

\bibitem[CM12]{CM12}
Brian Cook and {\'A}kos Magyar.
\newblock On restricted arithmetic progressions over finite fields.
\newblock {\em Online Journal of Analytic Combinatorics}, 7(1):1--10, 2012.

\bibitem[DFR08]{DFR08}
Irit Dinur, Ehud Friedgut, and Oded Regev.
\newblock Independent sets in graph powers are almost contained in juntas.
\newblock {\em Geometric \& Functional Analysis GAFA}, 18(1):77--97, 2008.

\bibitem[EG17]{EG17}
Jordan~S. Ellenberg and Dion Gijswijt.
\newblock On large subsets of {$\mathbb{F}_q^n$} with no three-term arithmetic
  progression.
\newblock {\em Annals of Mathematics}, 185:339--343, 2017.

\bibitem[FK91]{FK91}
Harry Furstenberg and Yitzhak Katznelson.
\newblock A density version of the {H}ales-{J}ewett theorem.
\newblock {\em Journal d’Analyse Mathématique}, 57(1):64--119, 1991.

\bibitem[FK14]{FK14}
Hu~Fu and Robert~D. Kleinberg.
\newblock Improved lower bounds for testing triangle-freeness in {B}oolean
  functions via fast matrix multiplication.
\newblock In {\em {APPROX-RANDOM}}, volume~28 of {\em LIPIcs}, pages 669--676,
  2014.

\bibitem[FL17]{FL17}
Jacob Fox and László~Miklós Lovász.
\newblock A tight bound for {G}reen's arithmetic triangle removal lemma in
  vector spaces.
\newblock {\em Advances in Mathematics}, 321:287--297, 2017.

\bibitem[FLS18]{FLS18}
Jacob Fox, László~Miklós Lovász, and Lisa Sauermann.
\newblock A polynomial bound for the arithmetic {$k$}-cycle removal lemma in
  vector spaces.
\newblock {\em Journal of Combinatorial Theory, Series A}, 160:186--201, 2018.

\bibitem[FR18]{FR18}
Ehud Friedgut and Oded Regev.
\newblock Kneser graphs are like {S}wiss cheese.
\newblock {\em Discrete Analysis}, 2, 2018.

\bibitem[Gow98]{Gow98}
W.~Timothy Gowers.
\newblock A new proof of {S}zemer{\'e}di's theorem for arithmetic progressions
  of length four.
\newblock {\em Geometric {\&} Functional Analysis GAFA}, 8(3):529--551, 1998.

\bibitem[Gow01]{Gow01}
W.~Timothy Gowers.
\newblock A new proof of {S}zemerédi's theorem.
\newblock {\em Geometric \& Functional Analysis GAFA}, 11(3):465--588, 2001.

\bibitem[Gow07]{Gow07}
W.~Timothy Gowers.
\newblock Hypergraph regularity and the multidimensional {S}zemerédi theorem.
\newblock {\em Annals of Mathematics}, 166(3):897--946, 2007.

\bibitem[Gre05a]{Gre05}
Ben Green.
\newblock {\em Finite field models in additive combinatorics}, pages 1--28.
\newblock Surveys in Combinatorics 2005. Cambridge University Press, 2005.

\bibitem[Gre05b]{Gre05a}
Ben Green.
\newblock A {S}zemer{\'e}di-type regularity lemma in abelian groups, with
  applications.
\newblock {\em Geometric {\&} Functional Analysis GAFA}, 15(2):340--376, 2005.

\bibitem[GT12]{GT12}
Ben Green and Terence Tao.
\newblock New bounds for {S}zemerédi's theorem, {I}a: {P}rogressions of length
  4 in finite field geometries revisited.
\newblock arXiv:1205.1330, 2012.

\bibitem[HHM18]{HHM18}
Jan Hązła, Thomas Holenstein, and Elchanan Mossel.
\newblock Product space models of correlation: Between noise stability and
  additive combinatorics.
\newblock {\em Discrete Analysis}, 20, 2018.

\bibitem[KSS18]{KSS18}
Robert Kleinberg, Will Sawin, and David Speyer.
\newblock The growth rate of tri-colored sum-free sets.
\newblock {\em Discrete Analysis}, 12, 2018.

\bibitem[KSV09]{KSV09}
Daniel Krá\v{l}, Oriol Serra, and Lluís Vena.
\newblock A combinatorial proof of the removal lemma for groups.
\newblock {\em Journal of Combinatorial Theory, Series A}, 116(4):971--978,
  2009.

\bibitem[KSV12]{KSV12}
Daniel Kr{\'a}{\v{l}}, Oriol Serra, and Llu{\'i}s Vena.
\newblock A removal lemma for systems of linear equations over finite fields.
\newblock {\em Israel Journal of Mathematics}, 187(1):193--207, 2012.

\bibitem[Lev18]{Lev20}
Vsevolod Lev.
\newblock Personal communication, 2018.

\bibitem[LS19]{LS18}
László~Miklós Lovász and Lisa Sauermann.
\newblock A lower bound for the {$k$}-multicolored sum-free problem in
  {$\mathbb{Z}_m^n$}.
\newblock {\em Proceedings of the London Mathematical Society}, 119(1):55--103,
  2019.

\bibitem[Mes95]{Mes95}
Roy Meshulam.
\newblock On subsets of finite abelian groups with no 3-term arithmetic
  progressions.
\newblock {\em Journal of Combinatorial Theory, Series A}, 71(1):168--172,
  1995.

\bibitem[Mos10]{Mos10}
Elchanan Mossel.
\newblock Gaussian bounds for noise correlation of functions.
\newblock {\em Geometric \& Functional Analysis GAFA}, 19(6):1713--1756, 2010.

\bibitem[MOS13]{MOS13}
Elchanan Mossel, Krzysztof Oleszkiewicz, and Arnab Sen.
\newblock On reverse hypercontractivity.
\newblock {\em Geometric \& Functional Analysis GAFA}, 23(3):1062--1097, 2013.

\bibitem[Mos20]{Mos17}
Elchanan Mossel.
\newblock Gaussian bounds for noise correlation of resilient functions.
\newblock {\em Israel Journal of Mathematics}, 235(1):111--137, 2020.

\bibitem[Nor19]{Nor16}
Sergey Norin.
\newblock A distribution on triples with maximum entropy marginal.
\newblock {\em Forum of Mathematics, Sigma}, 7:e46, 2019.

\bibitem[NRS06]{NRS06}
Brendan Nagle, Vojtěch Rödl, and Mathias Schacht.
\newblock The counting lemma for regular {$k$}-uniform hypergraphs.
\newblock {\em Random Structures \& Algorithms}, 28(2):113--179, 2006.

\bibitem[Peb18]{Peb18}
Luke Pebody.
\newblock Proof of a conjecture of {K}leinberg-{S}awin-{S}peyer.
\newblock {\em Discrete Analysis}, 13, 2018.

\bibitem[Rot53]{Rot53}
Klaus~F. Roth.
\newblock On certain sets of integers.
\newblock {\em Journal of the London Mathematical Society}, s1-28(1):104--109,
  1953.

\bibitem[RS04]{RS04}
Vojtěch Rödl and Jozef Skokan.
\newblock Regularity lemma for {$k$}-uniform hypergraphs.
\newblock {\em Random Structures \& Algorithms}, 25(1):1--42, 2004.

\bibitem[Sha10]{Sha10}
Asaf Shapira.
\newblock A proof of {G}reen's conjecture regarding the removal properties of
  sets of linear equations.
\newblock {\em Journal of the London Mathematical Society}, 81(2):355--373,
  2010.

\bibitem[Spi76]{Spi76}
Frank Spitzer.
\newblock {\em Principles of Random Walk}.
\newblock Springer-Verlag New York, 2nd edition, 1976.

\bibitem[Wol15]{Wol15}
Julia Wolf.
\newblock Finite field models in arithmetic combinatorics -- ten years on.
\newblock {\em Finite Fields and Their Applications}, 32(C):233--274, 2015.

\end{thebibliography}

\end{document}